\newcommand{\dbarb}{\overline{\partial}_b}
\newcommand{\zbar}{\overline{z}}
\newcommand{\Zbar}{\overline{Z}}
\newcommand{\omegabar}{\overline{\omega}}
\newcommand{\hZbar}{\hat{\Zbar}}
\newcommand{\hu}{\hat{u}}
\newcommand{\hf}{\hat{f}}
\newcommand{\htheta}{\hat{\theta}}
\newcommand{\hdbarb}{\hat{\dbarb}}
\newcommand{\homegabar}{\hat{\overline{\omega}}}
\newtheorem{thm}{Theorem}
\newtheorem{prop}[thm]{Proposition}
\newtheorem{lemma}[thm]{Lemma}
\newtheorem{cor}[thm]{Corollary}
\begin{document}

\title[The tangential Cauchy-Riemann complex]{The tangential Cauchy-Riemann complex on the Heisenberg group Via Conformal Invariance}
\author{Chin-Yu Hsiao}
\address{Universit{\"a}t zu K{\"o}ln,  Mathematisches Institut, Weyertal 86-90, 50931 K{\"o}ln, Germany}
\email{chsiao@math.uni-koeln.de}

\author{Po-Lam Yung}
\address{Department of Mathematics, Rutgers, the State University of New Jersey, Piscataway, NJ 08854}
\email{pyung@math.rutgers.edu}

\thanks{The first author is supported by the DFG funded project MA 2469/2-1.}
\thanks{The second author is supported in part by NSF grant DMS 1201474.} 

\begin{abstract}
The Heisenberg group $\mathbb{H}^1$ is known to be conformally equivalent to the CR sphere $\mathbb{S}^3$ minus a point. We use this fact, together with the knowledge of the tangential Cauchy-Riemann operator on the compact CR manifold $\mathbb{S}^3$, to solve the corresponding operator on $\mathbb{H}^1$.
\end{abstract}

\maketitle

\section{Introduction} \label{sect:intro}

Let $M$ be a strongly pseudoconvex CR manifold of real dimension 3. Then there is a distinguished complex 1-dimensional subspace $T^{1,0}$ of the complexified tangent space $\mathbb{C}TM$ of $M$, whose elements are called tangent vectors of type $(1,0)$. Let $\theta$ be a real contact 1-form on $M$, so that the kernel of $\theta$ is $T^{1,0} \oplus \overline{T^{1,0}}$. This induces a Hermitian inner product on $T^{1,0}$, given by $$(Z_1,Z_2) := d\theta(Z_1,i\Zbar_2), \qquad Z_1, Z_2 \in T^{1,0};$$ one can then define pointwise geometric quantities like the Webster scalar curvature $W$ on $M$. The pair $(M,\theta)$ is then called a pseudohermitian manifold; since then $\theta \wedge d\theta$ is a volume form on $M$, one can also define $L^r$ spaces of $(p,q)$ forms on $M$. If $\theta$ is a contact form satisfying the above conditions, then $v^2 \theta$ also satisfies the same conditions for any real-valued smooth function $v$ on $M$. The Hermitian inner product determined by $v^2 \theta$ is in general different from that determined by $\theta$, but they are conformally equivalent because for $Z_1, Z_2 \in T^{1,0}$, $$d(v^2 \theta)(Z_1, i\Zbar_2) = v^2 (d\theta)(Z_1, i\Zbar_2).$$

In this article, we look at two specific examples of pseudohermitian manifolds, that are conformally equivalent to each other. One is the Heisenberg group $\mathbb{H}^1$, which has zero Webster scalar curvature everywhere; another is the CR sphere $\mathbb{S}^3$ minus a point, which has constant Webster scalar curvature equal to 1. More precisely, the Heisenberg group $\mathbb{H}^1$ is the boundary of upper half space $\{\text{Im } z_2 > |z_1|^2 \}$ in $\mathbb{C}^2$, which we identify with $\mathbb{C} \times \mathbb{R}$ via $$\mathbb{C} \times \mathbb{R} \to \mathbb{H}^1$$ $$[z,t] \mapsto (z,t+i|z|^2).$$ It carries a standard contact form $$\theta = dt + i(z d\zbar - \zbar dz),$$ which gives $\mathbb{H}^1$ the structure of a pseudohermitian manifold. On the other hand, the CR sphere $\mathbb{S}^3$ is the boundary of unit ball $\{|\zeta| < 1 \}$ in $\mathbb{C}^2$, and it carries a standard contact form $$\htheta = i (\bar{\partial} - \partial) |\zeta|^2 = i \sum_{j=1}^2 (\zeta_j d\bar{\zeta}_j - \bar{\zeta}_j d\zeta_j).$$ If $p$ is the south pole $(0,-1)$ on $\mathbb{S}^2$, one can map $\mathbb{S}^3 \setminus \{p\}$ to $\mathbb{H}^1$ via stereographic projection: $$\zeta \in \mathbb{S}^3 \setminus \{p\} \mapsto (z,t+i|z|^2) \in \mathbb{H}^1$$ with $$z = \frac{\zeta_1}{1+\zeta_2}, \qquad t+i|z|^2 = i\frac{1-\zeta_2}{1+\zeta_2}.$$ We will always identify $\mathbb{H}^1$ with $\mathbb{S}^3 \setminus \{p\}$ this way, and pull $\theta$ and any function back from $\mathbb{H}^1$ to $\mathbb{S}^3 \setminus \{p\}$. Then $$\theta = G^2 \htheta,$$ where $$G(\zeta) := \frac{1}{|1+\zeta_2|},$$ so $(\mathbb{H}^1,\theta)$ and $(\mathbb{S}^3 \setminus \{p\},\htheta)$ are conformally equivalent to each other.

One can define inner products on functions and $(0,1)$ forms on $\mathbb{H}^1$ and $\mathbb{S}^3$. For functions $f,g$ on $\mathbb{H}^1$, we define their inner product by
$$\langle f, g \rangle_{\mathbb{H}^1} = \int_{\mathbb{H}^1} f \cdot \overline{g} \, \theta \wedge d\theta$$ whenever the integral makes sense. For $(0,1)$ forms $\alpha, \beta$ on $\mathbb{H}^1$, we define their inner product by 
$$\langle \alpha, \beta \rangle_{\mathbb{H}^1} = \int_{\mathbb{H}^1} (\alpha, \beta)_{\theta} \, \theta \wedge d\theta,$$ where $(\alpha,\alpha)_{\theta}$ is the Hermitian inner product on the dual bundle of $\overline{T^{1,0}}$ induced by $\theta$. Similarly, for functions $f,g$ on $\mathbb{S}^3$, we define their inner product by
$$\langle f, g \rangle_{\mathbb{S}^3} = \int_{\mathbb{S}^3} f \cdot \overline{g} \, \htheta \wedge d\htheta.$$ For $(0,1)$ forms $\alpha, \beta$ on $\mathbb{S}^3$, we define their inner product by 
$$\langle \alpha, \beta \rangle_{\mathbb{S}^3} = \int_{\mathbb{S}^3} (\alpha, \beta)_{\htheta} \, \htheta \wedge d\htheta.$$

One can also define $L^p$ spaces on both $\mathbb{H}^1$ and $\mathbb{S}^3$. For functions on $\mathbb{H}^1$, we define
$$\|f\|_{L^p(\mathbb{H}^1)} := \left( \int_{\mathbb{H}^1} |f|^p \theta \wedge d\theta \right)^{1/p}.$$
For $(0,1)$ forms on $\mathbb{H}^1$, we define
$$\|\alpha\|_{L^p_{(0,1)}(\mathbb{H}^1)} := \left( \int_{\mathbb{H}^1} |(\alpha,\alpha)_{\theta}|^{p/2} \theta \wedge d\theta \right)^{1/p}.$$
Similarly, one can define $L^p$ spaces of functions on $\mathbb{S}^3$: if $\hf$ is a function on $\mathbb{S}^3$,
$$\|\hf\|_{L^p(\mathbb{S}^3)} := \left( \int_{\mathbb{S}^3} |\hf|^p \htheta \wedge d\htheta \right)^{1/p},$$
as well as $L^p$ spaces of $(0,1)$ forms on $\mathbb{S}^3$: if $\hat{\alpha}$ is a (0,1) form on $\mathbb{S}^3$,
$$\|\hat{\alpha}\|_{L^p_{(0,1)}(\mathbb{S}^3)} := \left( \int_{\mathbb{S}^3} |(\hat{\alpha},\hat{\alpha})_{\htheta}|^{p/2} \htheta \wedge d\htheta \right)^{1/p}.$$ For future reference, we mention that if $f$ is a function on $\mathbb{H}^1$, then
$$\|f\|_{L^p(\mathbb{H}^1)} = \|G^{4/p} f\|_{L^p(\mathbb{S}^3)}.$$ 

Recall now the tangential Cauchy-Riemann operator on $\mathbb{H}^1$, which sends functions in $C^{\infty}_c(\mathbb{H}^1)$ to smooth $(0,1)$ forms on $\mathbb{H}^1$. If one takes the closure of the graph of this operator under the graph norm $L^4 \times L^2$, one obtains a densely defined closed linear operator $$\dbarb \colon L^4(\mathbb{H}^1) \to L^2_{(0,1)}(\mathbb{H}^1).$$ The formal adjoint of this operator sends $C^{\infty}_c$ $(0,1)$ forms on $\mathbb{H}^1$ to smooth functions on $\mathbb{H}^1$. The closure of this formal adjoint under the graph norm $L^4 \times L^2$ is written $$\dbarb^* \colon L^4_{(0,1)}(\mathbb{H}^1) \to L^2(\mathbb{H}^1).$$ 
Similarly, on $\mathbb{S}^3$, the tangential Cauchy-Riemann operator sends $C^{\infty}$ functions on $\mathbb{S}^3$ to smooth $(0,1)$ forms on $\mathbb{S}^3$. If one takes the closure of the graph of this operator under the graph norm $L^4 \times L^2$, one obtains a densely defined closed linear operator $$\hdbarb \colon L^4(\mathbb{S}^3) \to L^2_{(0,1)}(\mathbb{S}^3).$$ The formal adjoint of this operator sends $C^{\infty}$ $(0,1)$ forms on $\mathbb{S}^3$ to smooth functions on $\mathbb{S}^3$. The closure of this formal adjoint under the graph norm $L^4 \times L^2$ is written $$\hdbarb^* \colon L^4_{(0,1)}(\mathbb{S}^3) \to L^2(\mathbb{S}^3).$$ 

It is a well-known fact that on $(\mathbb{S}^3,\htheta)$, one can solve $\hat{\dbarb}$ and $\hat{\dbarb}^*$ with estimates; see e.g. Kohn-Rossi \cite{KR66} for the $L^2$ theory, and Greiner-Stein \cite[Proposition 10.9]{GS77}, Nagel-Stein \cite[Theorem 20]{NS79} for the $L^p$ theory. We will recall some of this in the next section\footnote{One can also refer to Folland-Stein \cite{FS74}, Rothschild-Stein \cite{RS76}, and Koenig \cite{Koe02} for the $L^p$ theory in higher dimensions. In fact, a lot is known even if $\mathbb{S}^3$ is replaced by the boundary of a weakly pseudoconvex domain of finite type in $\mathbb{C}^2$: see e.g. Kohn \cite{K86}, Boas-Shaw \cite{BS86}, and Christ \cite{C88}, \cite{C88b}.}. Our question is then the following: Can one solve $\dbarb$ (or $\dbarb^*$) on $\mathbb{H}^1$ using this knowledge on $\mathbb{S}^3$?

On the Heisenberg group, there are of course well-known integral formulas, with explicit kernels, that solve for us $\dbarb$ and $\dbarb^*$. On the other hand, these formula are very special, and works only because there is a group structure on the Heisenberg group. The method of solving $\dbarb$ and $\dbarb^*$ we describe below are more robust. This serves as a first step towards understanding the $\dbarb$ complex on some non-compact pseudohermitian CR manifolds of dimension 3.

More explicitly, let $\Zbar$ be the following vector field on $\mathbb{H}^1$: $$\Zbar = \frac{\partial}{\partial \zbar} - i z \frac{\partial}{\partial t},$$ and let $\omegabar = d\zbar$ be the dual $(0,1)$ form to $\Zbar$. Then if $u \in C^{\infty}_c(\mathbb{H}^1)$, we have $\dbarb u = (\Zbar u) \omegabar$. Thus solving $\dbarb \colon L^4(\mathbb{H}^1) \to L^2_{(0,1)}(\mathbb{H}^1)$ amounts to the following: one seeks conditions on a function $f \in L^2(\mathbb{H}^1)$, under which there exists a function $u \in L^4(\mathbb{H}^1)$, and a sequence $u_j \in C^{\infty}_c(\mathbb{H}^1)$, such that $$u_j \to u \text{ in $L^4(\mathbb{H}^1)$}, \quad \text{and} \quad \Zbar u_j \to f \text{ in $L^2(\mathbb{H}^1)$}.$$ When this holds, we say that $u \in L^4(\mathbb{H}^1)$ is a solution to the equation $$\Zbar u = f.$$ Similarly, let $\Zbar^*$ be the formal adjoint of $\Zbar$ under the inner product in $L^2(\mathbb{H}^1)$, i.e. $\Zbar^*$ is the differential operator satisfying $$\langle \Zbar f, g \rangle_{\mathbb{H}^1} = \langle f, \Zbar^* g\rangle_{\mathbb{H}^1}$$ for all $f, g \in C^{\infty}_c(\mathbb{H}^1)$. Then if $u \in C^{\infty}_c(\mathbb{H}^1)$, $\dbarb^* (u \omegabar) = \Zbar^* u$. Thus solving  $\dbarb^* \colon L^4(\mathbb{H}^1) \to L^2(\mathbb{H}^1)$ amounts to the following: one seeks conditions on a function $f \in L^2(\mathbb{H}^1)$, under which there exists a function $u \in L^4(\mathbb{H}^1)$, and a sequence $u_j \in C^{\infty}_c(\mathbb{H}^1)$, such that $$u_j \to u \text{ in $L^4(\mathbb{H}^1)$}, \quad \text{and} \quad \Zbar^* u_j \to f \text{ in $L^2(\mathbb{H}^1)$}.$$ When this holds, we say that $u \in L^4(\mathbb{H}^1)$ is a solution to the equation $$\Zbar^* u = f.$$

In order to state our results, we need the following extensions of $\Zbar$ and $\Zbar^*$ so that they become closed linear operators from $L^2(\mathbb{H}^1)$ to $L^{4/3}(\mathbb{H}^1)$. In fact, $\Zbar$ and $\Zbar^*$ are linear maps that preserve $C^{\infty}_c(\mathbb{H}^1)$. Thus we can take the closure of the graphs of these operators in the graph norm $L^2 \times L^{4/3}$, and obtain two closed linear operators $L^2(\mathbb{H}^1) \to L^{4/3}(\mathbb{H}^1)$. What we need are then the kernels of these closed linear operators: they are closed subspaces of $L^2(\mathbb{H}^1)$, and for convenience they will be referred to as \emph{the} kernel of $\Zbar$ and \emph{the} kernel of $\Zbar^*$. In other words, $u \in L^2(\mathbb{H}^1)$ is in the kernel of $\Zbar$, if and only if there exists a sequence $u_j \in C^{\infty}_c(\mathbb{H}^1)$, such that $$u_j \to u \text{ in $L^2(\mathbb{H}^1)$}, \quad \text{and} \quad \Zbar u_j \to 0 \text{ in $L^{4/3}(\mathbb{H}^1)$}.$$ Similarly, $u \in L^2(\mathbb{H}^1)$ is in the kernel of $\Zbar^*$, if and only if there exists a sequence $u_j \in C^{\infty}_c(\mathbb{H}^1)$, such that $$u_j \to u \text{ in $L^2(\mathbb{H}^1)$}, \quad \text{and} \quad \Zbar^* u_j \to 0 \text{ in $L^{4/3}(\mathbb{H}^1)$}.$$

Our main results are then the following:

\begin{thm} \label{thm:dbarb}
For any $f \in L^2(\mathbb{H}^1)$ that is orthogonal to the kernel of $\Zbar^*$ in $L^2(\mathbb{H}^1)$, there exists a solution $u \in L^4(\mathbb{H}^1)$ to the equation $\Zbar u = f$.
\end{thm}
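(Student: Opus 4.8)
The plan is to exploit the conformal invariance of $\dbarb$ on functions: since $\dbarb u$ for a function $u$ depends only on the CR structure and not on the choice of contact form, one has $\dbarb u = \hdbarb u$ as $(0,1)$ forms on $\mathbb{S}^3 \setminus \{p\}$. Writing the dual frames as $\omegabar = G\homegabar$, the equation $\Zbar u = f$ on $\mathbb{H}^1$ thus becomes $\hdbarb u = \hat{\beta}$ on $\mathbb{S}^3$, where $\hat{\beta} := f\omegabar = (Gf)\homegabar$. First I would check that $\hat{\beta} \in L^2_{(0,1)}(\mathbb{S}^3)$: this follows from the identity $\|f\|_{L^2(\mathbb{H}^1)} = \|G^2 f\|_{L^2(\mathbb{S}^3)}$ together with the uniform lower bound $G \geq 1/2$ on $\mathbb{S}^3$, which gives $\|\hat{\beta}\|_{L^2_{(0,1)}(\mathbb{S}^3)} \lesssim \|f\|_{L^2(\mathbb{H}^1)}$.

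Next I would transfer the orthogonality hypothesis. Computing the formal adjoint across the conformal change of inner products (the form metric scales by $G^{-2}$ while $\theta \wedge d\theta = G^4 \, \htheta \wedge d\htheta$) yields the covariance $\dbarb^* \alpha = G^{-4}\hdbarb^*(G^2 \alpha)$ for $(0,1)$ forms $\alpha$, so that $\alpha \in \ker \dbarb^*$ iff $G^2 \alpha \in \ker \hdbarb^*$; in particular $\ker \Zbar^*$ corresponds to $G^{-2}\ker \hdbarb^*$. The $L^2$ pairings then match, giving $\langle \hat{\beta}, \hat{\gamma}\rangle_{\mathbb{S}^3} = \langle f\omegabar, G^{-2}\hat{\gamma}\rangle_{\mathbb{H}^1}$ for every $\hat{\gamma} \in \ker \hdbarb^*$, and since $G^{-2}\hat{\gamma}$ represents an element of $\ker \Zbar^*$, the hypothesis $f \perp \ker \Zbar^*$ forces $\hat{\beta} \perp \ker \hdbarb^*$. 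Here I would take care to reconcile the graph-closure definitions of the two kernels, exhibiting the required approximating sequences; this is routine but must be carried out. With $\hat{\beta} \perp \ker \hdbarb^*$ in hand, the known solvability of $\hdbarb$ on the compact manifold $\mathbb{S}^3$ (Kohn--Rossi for the $L^2$ theory, Greiner--Stein / Nagel--Stein for the subelliptic $L^2 \to L^4$ gain) produces $\hu \in L^4(\mathbb{S}^3)$ with $\hdbarb \hu = \hat{\beta}$ and $\|\hu\|_{L^4(\mathbb{S}^3)} \lesssim \|\hat{\beta}\|_{L^2_{(0,1)}(\mathbb{S}^3)}$.

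Setting $u := \hu$ then solves $\Zbar u = f$ on $\mathbb{H}^1 = \mathbb{S}^3 \setminus \{p\}$, and the main obstacle is to upgrade this to a genuine $L^4(\mathbb{H}^1)$ solution. The difficulty is concentrated entirely at the deleted point $p$: since $\|u\|_{L^4(\mathbb{H}^1)} = \|Gu\|_{L^4(\mathbb{S}^3)}$ and $G$ blows up at $p$, membership in $L^4(\mathbb{S}^3)$ is strictly weaker than membership in $L^4(\mathbb{H}^1)$, and in fact $u$ must vanish at $p$ faster than the first power of the CR distance. I would exploit the freedom to add elements of $\ker \hdbarb$ --- which are annihilated by $\Zbar$ and so leave the equation unchanged --- choosing a CR function $h$ matching the low-order jet of $\hu$ at $p$, so that $u := \hu - h$ acquires the decay needed to land in $L^4(\mathbb{H}^1)$. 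This correction is transparent when $\hat{\beta}$ is smooth, i.e. for a dense class of $f$; the crux is then the a priori estimate $\|u\|_{L^4(\mathbb{H}^1)} \lesssim \|f\|_{L^2(\mathbb{H}^1)}$, equivalently a weighted bound $\|G\hu\|_{L^4(\mathbb{S}^3)} \lesssim \|G^2 f\|_{L^2(\mathbb{S}^3)}$ for the $\mathbb{S}^3$ solution operator conjugated by the singular weight $G$, which permits passage to general $f \in L^2(\mathbb{H}^1)$ by density. Finally I would record the approximating sequence $u_j \in C^\infty_c(\mathbb{H}^1)$ with $u_j \to u$ in $L^4$ and $\Zbar u_j \to f$ in $L^2$, obtained by cutting off near $p$ and mollifying, so that $u$ is a solution in the precise closed-operator sense. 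I expect this analysis near $p$ --- the weighted estimate and its compatibility with the graph closures --- to be the principal technical hurdle, the remainder being bookkeeping built on the conformal invariance of $\dbarb$.
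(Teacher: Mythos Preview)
Your plan has the right architecture --- transfer to $\mathbb{S}^3$ via the conformal equivalence, check orthogonality to $\mathcal{H}_1$, solve $\hdbarb$ there using the known $L^2 \to L^4$ theory --- and you correctly isolate the one genuine difficulty: the solution $\hu \in L^4(\mathbb{S}^3)$ produced on the sphere need not lie in $L^4(\mathbb{H}^1)$, since $\|u\|_{L^4(\mathbb{H}^1)} = \|Gu\|_{L^4(\mathbb{S}^3)}$ and $G$ blows up at $p$. But your proposed remedy --- subtract a CR function matching the jet of $\hu$ at $p$, then close by density via an a priori weighted estimate --- is precisely the step you leave unproven, and it is not routine: for general $f \in L^2$ the function $\hu$ has no pointwise value at $p$ to match, and the weighted bound you would need (essentially $\|G\,\hat{K}_1\alpha\|_{L^4} \lesssim \|G\alpha\|_{L^2}$ after a rank-one correction) is a nontrivial statement about the relative solution operator that you have not established. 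You acknowledge this as ``the principal technical hurdle,'' but as written the proof stops exactly there.

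The paper resolves this with an algebraic device that bypasses the weighted analysis entirely. The key observation is that $G = |h|$ where $h = 1/(1+\zeta_2)$ is itself a \emph{CR function} on $\mathbb{S}^3 \setminus \{p\}$. Instead of solving $\hZbar u = Gf$, one solves $\hZbar \hu = hGf$ on $\mathbb{S}^3$: since $|hG| = G^2$, one has the exact equality $\|hGf\|_{L^2(\mathbb{S}^3)} = \|f\|_{L^2(\mathbb{H}^1)}$, and the orthogonality to $\mathcal{H}_1$ transfers just as in your argument. The sphere machinery then gives $\hu \in L^4(\mathbb{S}^3)$, and one sets $u := h^{-1}\hu$. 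Because $h^{-1} = 1+\zeta_2$ is CR, $\Zbar u_j = h^{-1}G^{-1}\hZbar \hu_j \to f$; and because $|h^{-1}| = G^{-1}$, one gets $\|u\|_{L^4(\mathbb{H}^1)} = \|Gh^{-1}\hu\|_{L^4(\mathbb{S}^3)} = \|\hu\|_{L^4(\mathbb{S}^3)}$, again an exact equality of norms. So multiplying by the CR function $h$ \emph{before} solving on $\mathbb{S}^3$ --- rather than subtracting a CR correction \emph{after} --- is the missing idea: it turns the singular weight $G$ into the unimodular factor $Gh^{-1}$ and eliminates the weighted estimate altogether.
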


\begin{thm} \label{thm:dbarb*}
For any $f \in L^2(\mathbb{H}^1)$ that is orthogonal to the kernel of $\Zbar$ in $L^2(\mathbb{H}^1)$, there exists a solution $u \in L^4(\mathbb{H}^1)$ to the equation $\Zbar^* u = f$.
\end{thm}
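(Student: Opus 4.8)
The plan is to obtain Theorem~\ref{thm:dbarb*} as a closed range statement and to prove the underlying a priori estimate by transplanting the known solvability of $\hdbarb^*$ on the compact manifold $\mathbb{S}^3$ through the conformal identification $\mathbb{H}^1\cong\mathbb{S}^3\setminus\{p\}$. First I would observe that $\dbarb^*\colon L^4_{(0,1)}(\mathbb{H}^1)\to L^2(\mathbb{H}^1)$ is closed and densely defined, with Banach space adjoint equal to the closed extension $\Zbar\colon L^2(\mathbb{H}^1)\to L^{4/3}(\mathbb{H}^1)$ described just before the theorem, whose null space is the kernel of $\Zbar$. By the closed range theorem, the range of $\dbarb^*$ is the pre-annihilator of $\ker\Zbar$, that is, exactly $(\ker\Zbar)^{\perp}$, as soon as $\dbarb^*$ has closed range; and surjectivity of $\dbarb^*$ onto $(\ker\Zbar)^{\perp}$ is precisely the assertion of Theorem~\ref{thm:dbarb*}. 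Hence it is enough to prove an a priori estimate, most conveniently the dual one $\|v\|_{L^2(\mathbb{H}^1)}\le C\|\Zbar v\|_{L^{4/3}(\mathbb{H}^1)}$ for $v\in C^\infty_c(\mathbb{H}^1)$ orthogonal to $\ker\Zbar$, since $\Zbar$ acts on functions and is a CR invariant.

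Next I would record the conformal dictionary coming from $\theta=G^2\htheta$: one has $\m=G^4\,\htheta\wedge d\htheta$ and $(\alpha,\beta)_{\theta}=G^{-2}(\alpha,\beta)_{\htheta}$ on $(0,1)$ forms, which give the weighted $L^p$ isometries $\|u\|_{L^p(\mathbb{H}^1)}=\|G^{4/p}u\|_{L^p(\mathbb{S}^3)}$ for functions and $\|\alpha\|_{L^p_{(0,1)}(\mathbb{H}^1)}=\|G^{-1+4/p}\alpha\|_{L^p_{(0,1)}(\mathbb{S}^3)}$ for forms. Since $T^{1,0}$ depends only on the CR structure, $\dbarb=\hdbarb$ on functions, and dualizing yields $\dbarb^*\alpha=G^{-4}\hdbarb^*(G^2\alpha)$. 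The effect on the estimate above is clean: under the weighting $v\mapsto G^2 v$ the two sides become genuine $L^2(\mathbb{S}^3)$ and $L^{4/3}_{(0,1)}(\mathbb{S}^3)$ norms, and $\Zbar$ turns into $G^2\hdbarb(G^{-2}\,\cdot)$, i.e.\ $\hdbarb$ conjugated by the multiplier $G^2$. On $\mathbb{S}^3$ the corresponding inequality $\|w\|_{L^2(\mathbb{S}^3)}\le C\|\hdbarb w\|_{L^{4/3}(\mathbb{S}^3)}$ modulo the finite dimensional space of CR functions is available: it is the dual form of the subelliptic gain of one derivative, equivalently the Folland--Stein embedding at homogeneous dimension $Q=4$, furnished by Kohn--Rossi \cite{KR66} together with Greiner--Stein \cite{GS77} and Nagel--Stein \cite{NS79}. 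Transporting this back along the conjugation is what would yield the desired estimate on $\mathbb{H}^1$.

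The hard part will be the single puncture $p$, which is the point at infinity of $\mathbb{H}^1$ and at which $G=1/|1+\zeta_2|$ blows up. Because the conjugation is by the unbounded multiplier $G^2$, the transported inequality is not literally one for $\hdbarb$ but for $\hdbarb$ plus a zeroth order term whose coefficient is a multiple of $\hdbarb\log G$ --- morally the defect between the Webster curvatures $W\equiv 0$ on $\mathbb{H}^1$ and $W\equiv 1$ on $\mathbb{S}^3$. Near $p$ this coefficient is of critical size (comparable to the reciprocal of the Carnot--Carath\'eodory distance to $p$, hence in $L^q(\mathbb{S}^3)$ only for $q<4$), so it sits exactly at the borderline and cannot be absorbed by H\"older's inequality. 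To handle it I would use two features of the puncture. First, a point carries no capacity for the subelliptic Sobolev space attached to $Q=4$, so $C^\infty_c(\mathbb{H}^1)$ is dense in the relevant energy space and it suffices to argue for test functions vanishing near $p$, for which the conjugation is innocuous outside a shrinking neighborhood. Second, and this is the real crux, I would need the sharp size estimates for the Greiner--Stein solution kernels on $\mathbb{S}^3$ --- or an equivalent local subelliptic regularity and removable singularity analysis at $p$ --- to show that the transported solution inherits from its data precisely the vanishing at $p$ required to re-enter $L^4(\mathbb{H}^1)$ after multiplication by the growing weight. The same local analysis is what identifies $\ker\Zbar\subset L^2(\mathbb{H}^1)$ with the CR functions on $\mathbb{S}^3$ having the appropriate vanishing at $p$, so that the hypothesis $f\perp\ker\Zbar$ transfers to the correct solvability condition on $\mathbb{S}^3$.
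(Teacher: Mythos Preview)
Your overall strategy of transferring the problem to $\mathbb{S}^3$ via the conformal equivalence is the paper's strategy, and you have correctly identified the obstacle: conjugating by a power of $G$ produces a zeroth-order term with coefficient $\hZbar\log G$, which sits at the critical scale and cannot be absorbed. But the fix you propose---capacity arguments plus sharp kernel estimates at the puncture---is vague, and it is not clear it can be made to work; the borderline nature of the coefficient means the usual absorption and removable-singularity arguments do not apply automatically, and you would need something quite delicate that you have not supplied.

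The missing idea is simpler and cleaner, and is in fact stated in the paper's introduction as the second key ingredient: $G=|h|$ where $h=(1+\zeta_2)^{-1}$ is a \emph{CR function} on $\mathbb{S}^3\setminus\{p\}$. Instead of conjugating by real powers of $G$, one conjugates by integer powers of $h$ and $\bar h$. Since $\Zbar h=0$, there is \emph{no} zeroth-order term at all. Concretely, the paper uses the identity $\hZbar^* u=\bar h^{-2}G^4\,\Zbar^*\!\bigl(\bar h^{2}G^{-3}u\bigr)$ (equation~(\ref{eq:hZbar*}) with $k=2$), which rewrites $\Zbar^* u=f$ exactly as $\hZbar^*(\bar h^{-2}G^3 u)=\bar h^{-2}G^4 f$. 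The multipliers have the right modulus (since $|h|=G$) to make the $L^p$ transfers isometric, the orthogonality hypothesis on $f$ becomes orthogonality to $\mathcal H$ on $\mathbb{S}^3$ via Lemma~\ref{lem:kerneldbarb}, and one invokes Corollary~\ref{cor:hZbar*} directly. The choice $k=2$ (rather than $k=0$) is essential: with $k=0$ the multiplier would not carry $f$ into the correct orthogonal complement on $\mathbb{S}^3$. So the ``hard part'' you flag is not handled by analysis at the puncture but is dissolved algebraically by exploiting the holomorphic factor of the conformal weight.
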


We will prove these theorems by reducing them to the corresponding statements for $\hdbarb \colon L^4(\mathbb{S}^3) \to L^2_{(0,1)}(\mathbb{S}^3)$ and $\hdbarb^* \colon L^4_{(0,1)}(\mathbb{S}^3) \to L^2(\mathbb{S}^3)$ on $\mathbb{S}^3$. The key will be two-fold:
\begin{enumerate}[(a)]
\item the conformal equivalence of $(\mathbb{H}^1,\theta)$ with $(\mathbb{S}^3 \setminus \{p\}, \htheta)$ mentioned above, where $\theta = G^2 \htheta$; and 
\item the fact that $G = |h|$ where $h = \frac{1}{1+\zeta_2}$ is a CR function on $\mathbb{S}^3 \setminus \{p\}$.
\end{enumerate}

We remark that on $\mathbb{H}^1$, it is easy to see that $\overline{Z}^* = - Z$. As a result, Theorems~\ref{thm:dbarb} and~\ref{thm:dbarb*} are equivalent to each other. But this is a feature specific to $\mathbb{H}^1$. In anticipation of a more general theory, we have therefore adopted a more robust approach below, that does not depend on this fact.

\section{The tangential Cauchy-Riemann complex on $\mathbb{S}^3$}

Before we proceed, let's first recall and clarify the definitions of the operators $\hdbarb \colon L^4(\mathbb{S}^3) \to L^2_{(0,1)}(\mathbb{S}^3)$ and $\hdbarb^* \colon L^4_{(0,1)}(\mathbb{S}^3) \to L^2(\mathbb{S}^3)$. Let $\hZbar$ be the vector field on $\mathbb{S}^3 \setminus \{p\}$ that satisfies 
\begin{equation} \label{eq:hZbar}
\hZbar = G \Zbar \quad \text{on $\mathbb{S}^3 \setminus \{p\}$,}
\end{equation} 
and $\homegabar$ be the $(0,1)$ form dual to $\hZbar$. Then for $u \in C^{\infty}_c(\mathbb{S}^3 \setminus \{p\})$, $$\hdbarb u = (\hZbar u) \homegabar.$$ Furthermore, we have the following lemma:

\begin{lemma} \label{lem:domainhdbarb}
For a function $u \in L^4(\mathbb{S}^3)$, the following are equivalent: 
\begin{enumerate}[(a)]
\item $u$ is in the domain of $\hdbarb \colon L^4(\mathbb{S}^3) \to L^2_{(0,1)}(\mathbb{S}^3)$;
\item there exists a sequence $v_j \in C^{\infty}(\mathbb{S}^3)$, so that $$v_j \to u \text{ in $L^4(\mathbb{S}^3)$}, \quad \text{and} \quad \hdbarb v_j \to \alpha \text{ in $L^2_{(0,1)}(\mathbb{S}^3)$}$$ for some $\alpha \in L^2_{(0,1)}(\mathbb{S}^3)$;
\item there exists a sequence $u_j \in C^{\infty}_c(\mathbb{S}^3 \setminus \{p\})$, so that $$u_j \to u \text{ in $L^4(\mathbb{S}^3)$}, \quad \text{and} \quad \hZbar u_j \to f \text{ in $L^2(\mathbb{S}^3)$}$$ for some $f \in L^2(\mathbb{S}^3)$.
\end{enumerate}
If all the above holds, then $$\hdbarb u = \alpha = f \homegabar \quad \text{in $L^2_{(0,1)}(\mathbb{S}^3)$}.$$
\end{lemma}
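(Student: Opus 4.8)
The plan is to treat the equivalence of (a) and (b) as essentially tautological, and to concentrate the real work on the passage between (b) and (c), whose content is that the single point $p$ carries no capacity in the relevant Sobolev-type space. First, by the very definition of $\hdbarb \colon L^4(\mathbb{S}^3) \to L^2_{(0,1)}(\mathbb{S}^3)$ as the graph closure of the operator initially given on $C^{\infty}(\mathbb{S}^3)$, a function $u \in L^4(\mathbb{S}^3)$ lies in its domain precisely when there is a sequence $v_j \in C^{\infty}(\mathbb{S}^3)$ with $v_j \to u$ in $L^4$ and $\hdbarb v_j$ convergent in $L^2_{(0,1)}$, and then $\hdbarb u$ equals that limit $\alpha$. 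This is exactly (b), so (a)$\Leftrightarrow$(b) and $\hdbarb u = \alpha$ need nothing beyond unwinding the definition (closability being already asserted for $\hdbarb$). Since $\hdbarb w = (\hZbar w)\homegabar$ for smooth $w$, and $\homegabar$ is a fixed smooth nonvanishing $(0,1)$-form whose pointwise length with respect to $\htheta$ is bounded above and below on the compact $\mathbb{S}^3$, convergence of $\hdbarb v_j$ in $L^2_{(0,1)}$ is equivalent to convergence of the scalar functions $\hZbar v_j$ in $L^2$; calling that limit $f$ gives $\alpha = f\homegabar$. The implication (c)$\Rightarrow$(b) is then immediate, since $C^{\infty}_c(\mathbb{S}^3 \setminus \{p\}) \subset C^{\infty}(\mathbb{S}^3)$: any sequence as in (c) already serves in (b).

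The crux is therefore (b)$\Rightarrow$(c): given $v_j \in C^{\infty}(\mathbb{S}^3)$ as in (b), I must produce approximants that vanish near the pole $p$. I would multiply by a cutoff. Fix smooth functions $\chi_\epsilon$ on $\mathbb{S}^3$ with $0 \le \chi_\epsilon \le 1$, with $\chi_\epsilon \equiv 0$ on a neighborhood of $p$ and $\chi_\epsilon \equiv 1$ outside a small neighborhood of $p$, and set $u_j := \chi_{\epsilon_j} v_j \in C^{\infty}_c(\mathbb{S}^3 \setminus \{p\})$ for a sequence $\epsilon_j \to 0$ to be chosen. By the Leibniz rule $\hZbar u_j = \chi_{\epsilon_j}\,\hZbar v_j + (\hZbar \chi_{\epsilon_j})\,v_j$. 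The contributions $\chi_{\epsilon_j}(\hZbar v_j - f)$ and $\chi_{\epsilon_j}(v_j - u)$ are dominated by $\|\hZbar v_j - f\|_{L^2}$ and $\|v_j - u\|_{L^4}$, while $(\chi_{\epsilon_j}-1)f$ and $(\chi_{\epsilon_j}-1)u$ tend to $0$ in $L^2$ and $L^4$ respectively by dominated convergence, being supported in neighborhoods of the single point $p$ that shrink to a null set, with $f \in L^2$ and $u \in L^4$.

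The one genuinely nontrivial term is the error $(\hZbar \chi_{\epsilon_j})\,v_j$, and controlling it is the heart of the proof. By H\"older's inequality, $\|(\hZbar \chi_\epsilon) v_j\|_{L^2(\mathbb{S}^3)} \le \|\hZbar \chi_\epsilon\|_{L^4(\mathbb{S}^3)}\,\|v_j\|_{L^4(\mathbb{S}^3)}$, and since $\|v_j\|_{L^4}$ stays bounded it suffices to exhibit cutoffs with
$$\|\hZbar \chi_\epsilon\|_{L^4(\mathbb{S}^3)} \to 0 \quad \text{as } \epsilon \to 0.$$
This is exactly the assertion that $p$ has vanishing capacity in the Folland--Stein space $S^{1,4}$, which holds because $4$ is the homogeneous dimension of $\mathbb{S}^3$. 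Concretely I would use a logarithmic cutoff adapted to the Carnot--Carath\'eodory distance $\rho(\cdot,p)$, letting $\chi_\epsilon$ interpolate (suitably smoothed) from $0$ to $1$ as $\rho$ runs over the annulus $\epsilon^2 \le \rho \le \epsilon$, so that $|\hZbar \chi_\epsilon| \lesssim (\rho\,\log(1/\epsilon))^{-1}$ there and vanishes elsewhere. Using that the volume of $\{\rho < r\}$ grows like $r^4$, a direct computation gives $\|\hZbar \chi_\epsilon\|_{L^4(\mathbb{S}^3)}^4 \lesssim (\log(1/\epsilon))^{-3} \to 0$.

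With such cutoffs in hand, a diagonal choice of $\epsilon_j \to 0$ drives every error term to zero, yielding $u_j \to u$ in $L^4(\mathbb{S}^3)$ and $\hZbar u_j \to f$ in $L^2(\mathbb{S}^3)$, which is (c); combined with the identification $\alpha = f\homegabar$ above this also records the final formula $\hdbarb u = \alpha = f\homegabar$. The main obstacle is thus the borderline (critical-exponent) capacity estimate for the point $p$; everything else reduces to bookkeeping with the triangle inequality and dominated convergence. I would carry out the volume and horizontal-gradient estimates near $p$ in the stereographic/Heisenberg coordinates, where $\hZbar = G\Zbar$ and the geometry near the pole is explicit.
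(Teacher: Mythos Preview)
Your proof is correct, and the overall architecture (the tautology (a)$\Leftrightarrow$(b), the triviality (c)$\Rightarrow$(b), and the cutoff argument for (b)$\Rightarrow$(c)) matches the paper's. The one genuine difference is in how you handle the commutator term $(\hZbar\chi_\epsilon)v_j$. You invoke a capacity argument: you build a \emph{logarithmic} cutoff on the annulus $\epsilon^2\le\rho\le\epsilon$ so that $\|\hZbar\chi_\epsilon\|_{L^4(\mathbb{S}^3)}\lesssim(\log(1/\epsilon))^{-3/4}\to 0$, and then only the boundedness of $\|v_j\|_{L^4}$ is needed. The paper instead uses a \emph{simple} bump cutoff supported on a ball of radius $\sim\varepsilon_j$, for which $\|\hZbar\chi_j\|_{L^4}$ is merely bounded (not tending to zero); it compensates by localizing the other factor, writing $\|(\hZbar\chi_j)v_j\|_{L^2}\le\|\hZbar\chi_j\|_{L^4}\,\|v_j\|_{L^4(\operatorname{supp}\chi_j)}$ and choosing $\varepsilon_j$ (after $v_j$ is fixed) small enough that $\|v_j\|_{L^4(\operatorname{supp}\chi_j)}\to 0$ by absolute continuity of the integral. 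Your route proves the reusable fact that a point has zero $S^{1,4}$-capacity; the paper's route avoids the logarithmic construction at the cost of a diagonal choice of $\varepsilon_j$ depending on $v_j$. Similarly, for the identification $\alpha=f\homegabar$ you use that $\homegabar$ is a global unit $(0,1)$-frame to pass directly between $L^2_{(0,1)}$ and scalar $L^2$ convergence, whereas the paper establishes it by pairing against test forms $g\homegabar$ with $g\in C^\infty_c(\mathbb{S}^3\setminus\{p\})$ and integrating by parts; both are fine.
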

\begin{proof}
In fact, (a) and (b) are equivalent by definition, and it is clear that (c) implies (b). To see that (b) implies (c), note that if $v_j$ are as in (b), then one can take $u_j := (1-\chi_j) v_j$, where $\chi_j$ is a smooth function on $\mathbb{S}^3$ so that it is identically 1 in a (non-isotropic) ball of radius $\varepsilon_j$ centered at $p$, 0 outside a ball of radius $2 \varepsilon_j$, and $\|\hZbar \chi_j\|_{L^{\infty}} + \|\hat{Z} \chi_j\|_{L^{\infty}} \leq C \varepsilon_j^{-1}$. Here we choose $\varepsilon_j \to 0$ sufficiently rapidly so that $\|v_j\|_{L^4(\text{supp } \chi_j)} + \|\hZbar v_j\|_{L^2( \text{supp } \chi_j)} \to 0$ as $j \to \infty$. Then $u_j \in C^{\infty}_c (\mathbb{S}^3 \setminus \{p\})$, 
\begin{align*}
\|u_j - u\|_{L^4} 
&\leq \|v_j - u\|_{L^4} + \|\chi_j v_j\|_{L^4} \\
&\leq \|v_j - u\|_{L^4} + C \|v_j\|_{L^4( \text{supp } \chi_j)} \to 0 
\end{align*} 
as $j \to \infty$, and 
\begin{align*}
\|\hZbar u_j - f\|_{L^2} 
&\leq \|(1-\chi_j) \hZbar v_j - f\|_{L^2} + \|(\hZbar \chi_j)v_j\|_{L^2} \\
&\leq \|\hZbar v_j - f\|_{L^2} + \|\chi_j \hZbar v_j\|_{L^2} + \|(\hZbar \chi_j)\|_{L^4} \|v_j\|_{L^4(\text{supp } \chi_j)} \\
&\leq \|\hZbar v_j - f\|_{L^2} + C \|\hZbar v_j\|_{L^2 (\text{supp } \chi_j)} + C \|v_j\|_{L^4(\text{supp } \chi_j)} \to 0
\end{align*}
as $j \to \infty$. 

Now suppose (a) through (c) all holds. One can show that $\alpha = f \homegabar$ by testing them against a $(0,1)$ form that is smooth and compactly supported in $\mathbb{S}^3 \setminus \{p\}$. In fact, if $g \homegabar$ is such a form, then 
$$\langle \alpha, g\homegabar \rangle_{\mathbb{S}^3} = \langle u ,\hZbar^* g \rangle_{\mathbb{S}^3} = \langle f \homegabar, g\homegabar\rangle_{\mathbb{S}^3},$$ which shows that $\alpha = f \homegabar$ in $L^2_{(0,1)}(\mathbb{S}^3)$. They are equal to $\hdbarb u$ by definition.
\end{proof}

Next, let $\hZbar^*$ be the formal adjoint of $\hZbar$ under the inner product on $L^2(\mathbb{S}^3)$. In other words, $\hZbar^*$ is the differential operator satisfying
$$\langle \hZbar f, g \rangle_{\mathbb{S}^3} = \langle f, \hZbar^* g \rangle_{\mathbb{S}^3}$$ for all $f, g \in C^{\infty}_c(\mathbb{S}^3 \setminus \{p\})$. Then for $u \in C^{\infty}_c(\mathbb{S}^3 \setminus \{p\})$, $$\hdbarb^* (u \homegabar) = \hZbar^* u.$$ Furthermore, by the same argument as the one proving Lemma~\ref{lem:domainhdbarb}, we have:
\begin{lemma} \label{lem:domainhdbarb*}
For a $(0,1)$ form $\alpha \in L^4_{(0,1)}(\mathbb{S}^3)$, the following are equivalent: 
\begin{enumerate}[(a)]
\item $\alpha$ is in the domain of $\hdbarb^* \colon L^4_{(0,1)}(\mathbb{S}^3) \to L^2(\mathbb{S}^3)$;
\item there exists a sequence of smooth $(0,1)$ forms $\alpha_j$ on $\mathbb{S}^3$, so that $$\alpha_j \to \alpha \text{ in $L^4_{(0,1)}(\mathbb{S}^3)$}, \quad \text{and} \quad \hdbarb^* \alpha_j \to f \text{ in $L^2(\mathbb{S}^3)$}$$ for some $f \in L^2(\mathbb{S}^3)$;
\item there exists a sequence of functions $u_j \in C^{\infty}_c(\mathbb{S}^3 \setminus \{p\})$, so that $$u_j \homegabar \to \alpha \text{ in $L^4_{(0,1)}(\mathbb{S}^3)$}, \quad \text{and} \quad \hZbar^* u_j \to g \text{ in $L^2(\mathbb{S}^3)$}$$ for some $g \in L^2(\mathbb{S}^3)$.
\end{enumerate}
If all the above holds, then $$\hdbarb^* \alpha = f = g \quad \text{in $L^2(\mathbb{S}^3)$}.$$
\end{lemma}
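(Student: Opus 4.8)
The plan is to imitate the proof of Lemma~\ref{lem:domainhdbarb} line by line, interchanging the roles of functions and $(0,1)$ forms and replacing $\hdbarb$ by $\hdbarb^*$. As in that proof, (a) and (b) are equivalent by definition, since $\hdbarb^*$ is by construction the closure, in the graph norm $L^4\times L^2$, of the formal adjoint acting on smooth $(0,1)$ forms on $\mathbb{S}^3$. The implication (c)$\Rightarrow$(b) is immediate: if $u_j\in C^\infty_c(\mathbb{S}^3\setminus\{p\})$, then each $\alpha_j:=u_j\homegabar$ is a smooth $(0,1)$ form on all of $\mathbb{S}^3$, being supported away from $p$, and $\hdbarb^*\alpha_j=\hZbar^*u_j$ by the relation recalled just before the lemma; thus $\{\alpha_j\}$ already witnesses (b), with $f=g$.

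For (b)$\Rightarrow$(c) I would first note that $\homegabar$ extends to a smooth, nowhere-vanishing global frame for the $(0,1)$-cotangent bundle of $\mathbb{S}^3$; indeed $\hZbar$ has constant length with respect to $\htheta$, since $(Z,Z)_\theta=G^2(Z,Z)_{\htheta}$ together with $\hZbar=G\Zbar$ forces $|\hZbar|^2_{\htheta}=|\Zbar|^2_\theta$, which is constant. Given $\alpha_j$ as in (b), set $u_j\homegabar:=(1-\chi_j)\alpha_j$, where $\chi_j$ is the same cutoff as in Lemma~\ref{lem:domainhdbarb}: identically $1$ on a non-isotropic ball of radius $\varepsilon_j$ about $p$, supported in the ball of radius $2\varepsilon_j$, with $\|\hZbar\chi_j\|_{L^\infty}+\|\hat{Z}\chi_j\|_{L^\infty}\le C\varepsilon_j^{-1}$. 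Dividing by the nonvanishing frame $\homegabar$ away from $p$ defines $u_j\in C^\infty_c(\mathbb{S}^3\setminus\{p\})$. Choosing $\varepsilon_j\to0$ rapidly enough that $\|\alpha_j\|_{L^4(\text{supp}\,\chi_j)}+\|\hdbarb^*\alpha_j\|_{L^2(\text{supp}\,\chi_j)}\to0$, the $L^4$ convergence $u_j\homegabar\to\alpha$ follows from $\|u_j\homegabar-\alpha\|_{L^4}\le\|\alpha_j-\alpha\|_{L^4}+\|\chi_j\alpha_j\|_{L^4}$.

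The main work, and the step I expect to be the crux, is the $L^2$ convergence $\hZbar^*u_j\to f$. Expanding $\hZbar^*u_j=\hdbarb^*\big((1-\chi_j)\alpha_j\big)$ by the Leibniz rule gives
$$\hZbar^*u_j=(1-\chi_j)\,\hdbarb^*\alpha_j+\big[\hdbarb^*,1-\chi_j\big]\alpha_j,$$
where the commutator is a zeroth-order term bounded pointwise by $C\,|\hat{Z}\chi_j|\,|\alpha_j|$, because the principal part of $\hdbarb^*$ is $-\hat{Z}$. The first term converges to $f$, since $\hdbarb^*\alpha_j\to f$ and $\|\chi_j\hdbarb^*\alpha_j\|_{L^2}\le\|\hdbarb^*\alpha_j\|_{L^2(\text{supp}\,\chi_j)}\to0$. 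For the commutator term I would combine Hölder's inequality with the scaling of the cutoff: since $\mathbb{S}^3$ has homogeneous dimension $4$, $|\text{supp}\,\chi_j|\le C\varepsilon_j^4$, so $\|\hat{Z}\chi_j\|_{L^4}\le C\varepsilon_j^{-1}\varepsilon_j=C$ is uniformly bounded, whence $\|(\hat{Z}\chi_j)\alpha_j\|_{L^2}\le\|\hat{Z}\chi_j\|_{L^4}\|\alpha_j\|_{L^4(\text{supp}\,\chi_j)}\to0$. This is exactly the balance that makes the $L^4\times L^2$ graph norm the natural one here, and the same computation already appears in Lemma~\ref{lem:domainhdbarb}.

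Finally, to identify $\hdbarb^*\alpha=f=g$ once all three conditions hold, I would test against an arbitrary $\psi\in C^\infty(\mathbb{S}^3)$. From (b), using that $\hdbarb^*$ is the formal adjoint of $\hdbarb$ on the compact manifold $\mathbb{S}^3$ and that $\alpha_j\to\alpha$ in $L^4\subset L^2$ while $\hdbarb\psi$ is fixed and smooth,
$$\langle f,\psi\rangle_{\mathbb{S}^3}=\lim_j\langle\hdbarb^*\alpha_j,\psi\rangle_{\mathbb{S}^3}=\lim_j\langle\alpha_j,\hdbarb\psi\rangle_{\mathbb{S}^3}=\langle\alpha,\hdbarb\psi\rangle_{\mathbb{S}^3}.$$
From (c), the forms $u_j\homegabar$ are smooth on $\mathbb{S}^3$, so the identical pairing gives $\langle g,\psi\rangle_{\mathbb{S}^3}=\lim_j\langle\hdbarb^*(u_j\homegabar),\psi\rangle_{\mathbb{S}^3}=\langle\alpha,\hdbarb\psi\rangle_{\mathbb{S}^3}$. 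Hence $\langle f,\psi\rangle_{\mathbb{S}^3}=\langle g,\psi\rangle_{\mathbb{S}^3}$ for all such $\psi$; since $C^\infty(\mathbb{S}^3)$ is dense in $L^2(\mathbb{S}^3)$, this forces $f=g$, and both equal $\hdbarb^*\alpha$ by the definition of $\hdbarb^*$.
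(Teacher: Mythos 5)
Your proof is correct and takes essentially the same approach as the paper, which establishes this lemma simply by invoking ``the same argument as the one proving Lemma~\ref{lem:domainhdbarb}'': your cutoff $(1-\chi_j)\alpha_j$, the H\"older balance $\|\hat{Z}\chi_j\|_{L^4}\leq C$ against $\|\alpha_j\|_{L^4(\text{supp}\,\chi_j)}\to 0$, and the identification $f=g=\hdbarb^*\alpha$ by testing against smooth data are exactly that adaptation. The only superfluous step is your claim that $\homegabar$ extends to a smooth global frame on all of $\mathbb{S}^3$ (constant length alone would not give smooth extension across $p$), but this is harmless: since $(1-\chi_j)\alpha_j$ vanishes near $p$, you only ever divide by $\homegabar$ on $\mathbb{S}^3\setminus\{p\}$, where it is certainly a smooth nonvanishing frame.
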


We record for later use the following formula for $\hZbar^*$: if $u \in C^{\infty}_c(\mathbb{S}^3 \setminus \{p\})$, then $$\hZbar^* u = G^4 \Zbar^* (G^{-3} u).$$ This is because for any $v \in C^{\infty}_c(\mathbb{S}^3 \setminus \{p\})$, we have
$$\langle \hZbar^* u, v \rangle_{\mathbb{S}^3} = \langle u, \hZbar v \rangle_{\mathbb{S}^3} = \langle G^{-4} u, G \Zbar v \rangle_{\mathbb{H}^1} = \langle \Zbar^* G^{-3} u, v \rangle_{\mathbb{H}^1} = \langle G^4 \Zbar^* G^{-3} u, v \rangle_{\mathbb{S}^3}.$$ By a similar argument, for any $u \in C^{\infty}_c(\mathbb{S}^3 \setminus \{p\})$ and any integer $k$, we have
\begin{equation} \label{eq:hZbar*}
\hZbar^* u = \bar{h}^{-k} G^4 \Zbar^* (\bar{h}^k G^{-3} u):
\end{equation}
The key is that $h$ is a CR function on $\mathbb{H}^1$ (i.e. $\Zbar h = 0$ on $\mathbb{H}^1$). Thus  for any $v \in C^{\infty}_c(\mathbb{H}^1)$, we have
\begin{align*}
\langle \hZbar^* u, v \rangle_{\mathbb{S}^3} 
&= \langle G^{-4} u, G \Zbar v \rangle_{\mathbb{H}^1} \\ 
&= \langle \bar{h}^k G^{-3} u, \Zbar (h^{-k} v) \rangle_{\mathbb{H}^1} \\
&= \langle \Zbar^* (\bar{h}^k G^{-3} u), h^{-k} v \rangle_{\mathbb{H}^1} \\
&= \langle h^{-k} G^4 \Zbar^* (\bar{h}^k G^{-3} u), v \rangle_{\mathbb{S}^3}.
\end{align*}

To proceed further, an important fact we need is that the operators $\hdbarb \colon L^4(\mathbb{S}^3) \to L^2_{(0,1)}(\mathbb{S}^3)$ and $\hdbarb^* \colon L^4_{(0,1)}(\mathbb{S}^3) \to L^2(\mathbb{S}^3)$ have closed ranges. We will briefly recall the proof of this in what follows. To describe the range of these operators, we need to introduce another closure of the tangential Cauchy-Riemann operator, and another closure of its formal adjoint, which we define as follows:

First, the tangential Cauchy-Riemann operator sends $C^{\infty}$ functions on $\mathbb{S}^3$ to smooth $(0,1)$ forms on $\mathbb{S}^3$. We take the closure of this operator in the graph norm $L^2 \times L^2$, and obtain a closed linear operator
$$\hdbarb \colon L^2(\mathbb{S}^3) \to L^2_{(0,1)}(\mathbb{S}^3).$$ (Note this is a different closure than the one we took earlier!) In other words, we say that $u \in L^2(\mathbb{S}^3)$ is in the domain of $\hdbarb \colon L^2(\mathbb{S}^3) \to L^2_{(0,1)}(\mathbb{S}^3)$, if and only if there exists a sequence $u_j \in C^{\infty}(\mathbb{S}^3)$ such that $$u_j \to u \text{ in $L^2(\mathbb{S}^3)$}, \quad \text{and} \quad \hdbarb u_j \to \alpha \text{ in $L^2_{(0,1)}$}$$ for some $\alpha \in L^2_{(0,1)}(\mathbb{S}^3)$. In that case $\hdbarb u = \alpha$.

Next, we let $$\hdbarb^* \colon L^2_{(0,1)}(\mathbb{S}^3) \to L^2(\mathbb{S}^3)$$ be the Hilbert space adjoint of $\hdbarb \colon L^2(\mathbb{S}^3) \to L^2_{(0,1)}(\mathbb{S}^3)$. In other words, $\alpha \in L^2_{(0,1)}(\mathbb{S}^3)$ is in the domain of $\hdbarb^* \colon L^2_{(0,1)}(\mathbb{S}^3) \to L^2(\mathbb{S}^3)$, if and only if there exists a function $f \in L^2(\mathbb{S}^3)$ such that
$$\langle \alpha, \hdbarb u \rangle_{\mathbb{S}^3} = \langle f, u \rangle_{\mathbb{S}^3}$$ for all $u$ in the domain of $\hdbarb \colon L^2(\mathbb{S}^3) \to L^2_{(0,1)}(\mathbb{S}^3)$. In that case $\hdbarb^* \alpha = f$. Since the Hilbert space adjoint of a closed linear operator is again a closed linear operator, we note that $\hdbarb^* \colon L^2_{(0,1)}(\mathbb{S}^3) \to L^2(\mathbb{S}^3)$ is a closed linear operator as well.

Since $(\mathbb{S}^3,\htheta)$ is strongly pseudoconvex and embedded in $\mathbb{C}^2$, we have:
\begin{prop}
The ranges of the operators $$\hdbarb \colon L^2(\mathbb{S}^3) \to L^2_{(0,1)}(\mathbb{S}^3) \quad \text{and} \quad \hdbarb^* \colon L^2_{(0,1)}(\mathbb{S}^3) \to L^2(\mathbb{S}^3)$$ are closed subspaces of $L^2_{(0,1)}(\mathbb{S}^3)$ and $L^2(\mathbb{S}^3)$ respectively.
\end{prop}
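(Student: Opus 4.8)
The plan is to deduce both closed-range assertions from a single a priori estimate, exploiting that the two operators in the Proposition are Hilbert-space adjoints of one another. Recall the general principle: if $T$ is a densely defined closed operator between Hilbert spaces, then $\mathrm{Ran}(T)$ is closed if and only if $\mathrm{Ran}(T^*)$ is closed, and this happens if and only if there is a constant $C$ with $\|u\| \le C\|Tu\|$ for every $u \in \mathrm{Dom}(T)$ orthogonal to $\ker T$. Applying this to $T = \hdbarb \colon L^2(\mathbb{S}^3) \to L^2_{(0,1)}(\mathbb{S}^3)$, whose Hilbert-space adjoint is exactly $\hdbarb^* \colon L^2_{(0,1)}(\mathbb{S}^3) \to L^2(\mathbb{S}^3)$, it suffices to establish the Poincar\'e-type estimate
$$\|u\|_{L^2(\mathbb{S}^3)} \le C\,\|\hdbarb u\|_{L^2_{(0,1)}(\mathbb{S}^3)} \qquad \text{for all } u \in \mathrm{Dom}(\hdbarb) \text{ with } u \perp \ker\hdbarb.$$
Once this holds, both ranges are closed simultaneously.

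Next I would reformulate this estimate through the Kohn Laplacian. On functions the relevant operator is $\boxb = \hdbarb^*\hdbarb$, which I would realize as a nonnegative self-adjoint operator via its Friedrichs extension. Since $\langle \boxb u, u\rangle = \|\hdbarb u\|^2$, the kernel of $\boxb$ coincides with $\ker\hdbarb$, i.e.\ with the space of $L^2$ CR functions, and the displayed estimate is equivalent to the assertion that the nonzero spectrum of $\boxb$ is bounded away from $0$; that is, $\boxb$ exhibits a positive spectral gap above its kernel. Thus the whole matter reduces to producing this spectral gap on $(\mathbb{S}^3,\htheta)$.

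The main obstacle is that the usual subelliptic machinery is unavailable. Kohn's $1/2$-gain estimate for $\boxb$ on $(0,q)$ forms requires the condition $Y(q)$, which for a strongly pseudoconvex CR manifold of CR dimension $m$ holds only in the range $1 \le q \le m-1$; for $\mathbb{S}^3$ one has $m=1$, so this range is empty and subellipticity fails at \emph{both} relevant degrees $q=0$ and $q=1$. Consequently one cannot simply repeat the argument valid in real dimension $\ge 5$. The resolution uses global rather than purely local information, and this is precisely where the hypothesis that $\mathbb{S}^3$ is embedded in $\mathbb{C}^2$ (equivalently, that it bounds the strongly pseudoconvex unit ball) enters: closed range of $\hdbarb$ on an embeddable, compact, strongly pseudoconvex $3$-dimensional CR manifold is Kohn's theorem on the range of the tangential Cauchy--Riemann operator, of which the present Proposition is the special case $M=\mathbb{S}^3$.

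For the round sphere, however, I would prefer to exhibit the spectral gap directly, which is the cleanest self-contained route. Decomposing $L^2(\mathbb{S}^3)$ into the bigraded spaces $H_{p,q}$ of restrictions to $\mathbb{S}^3$ of harmonic polynomials bihomogeneous of bidegree $(p,q)$, the operator $\boxb$ acts as a nonnegative scalar on each $H_{p,q}$; this scalar vanishes precisely on the CR functions (the summands with $q=0$) and is bounded below by a fixed positive constant on all remaining summands. That uniform lower bound is the required spectral gap, hence the estimate, hence closed range. Carrying out this eigenvalue computation — or, equivalently, invoking the embeddability-based closed range theorem — is the one genuinely nontrivial step; everything else is the functional-analytic packaging described above.
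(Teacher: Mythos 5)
Your proposal is correct, and its preferred branch is genuinely different from what the paper does. In fact the paper offers no proof at all: the Proposition is stated as an immediate consequence of $(\mathbb{S}^3,\htheta)$ being strongly pseudoconvex and embedded in $\mathbb{C}^2$, i.e.\ it is quoted as the classical closed-range theorem of Kohn--Rossi and Kohn (the paper's references \cite{KR66}, \cite{K86}) --- which is exactly your ``route (i)''. Your route (ii), by contrast, is a self-contained argument special to the round sphere: decompose $L^2(\mathbb{S}^3)=\bigoplus_{p,q}H_{p,q}$ into restrictions of bihomogeneous harmonic polynomials; by unitary invariance $\hdbarb^*\hdbarb$ acts on each irreducible piece $H_{p,q}$ by a scalar $\lambda_{p,q}$, which is a positive multiple of $q(p+1)$ (the constant depending on the normalization of $\htheta$), so it vanishes exactly on the $q=0$ summands (the Hardy space of CR functions) and is bounded below by a positive constant on all others; this gives the a priori estimate $\|u\|\le C\|\hdbarb u\|$ on $(\ker\hdbarb)^\perp$, and your functional-analytic reduction (closed range of a densely defined closed $T$ $\Leftrightarrow$ closed range of $T^*$ $\Leftrightarrow$ the estimate) is the correct packaging, noting that the paper indeed defines the $L^2$ operator $\hdbarb^*$ as the Hilbert-space adjoint of $\hdbarb$. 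The trade-off is clear: the citation route is what makes the paper's program robust, since it applies to any compact embeddable strongly pseudoconvex CR $3$-manifold, and some such global hypothesis is unavoidable --- by Rossi's examples of non-embeddable perturbations of $\mathbb{S}^3$, closed range can genuinely fail in dimension $3$, exactly as your discussion of the failure of condition $Y(q)$ anticipates; your spectral route is tied to the symmetry of the sphere but buys an explicit gap and avoids the $\bar\partial$-Neumann machinery on the ball. One simplification: you do not need the Friedrichs extension of $\boxb$ at all. Since each $H_{p,q}$ consists of smooth functions and
$$\langle \hdbarb u_{p,q}, \hdbarb u_{p',q'}\rangle_{\mathbb{S}^3} = \lambda_{p,q}\,\langle u_{p,q}, u_{p',q'}\rangle_{\mathbb{S}^3} = 0 \quad \text{for } (p,q)\neq(p',q'),$$
you can write $\|\hdbarb u\|^2=\sum_{p,q}\lambda_{p,q}\|u_{p,q}\|^2$ for smooth $u$ and pass to the closure; this yields the estimate and identifies $\ker\hdbarb$ with the Hardy space in one stroke, with no self-adjoint extension theory needed.
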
 

Let now $\mathcal{H}$ and $\mathcal{H}_1$ be the kernels of $\hdbarb \colon L^2(\mathbb{S}^3) \to L^2_{(0,1)}(\mathbb{S}^3)$ and $\hdbarb^* \colon L^2_{(0,1)}(\mathbb{S}^3) \to L^2(\mathbb{S}^3)$ respectively. These are closed subspaces of $L^2(\mathbb{S}^3)$ and $L^2_{(0,1)}(\mathbb{S}^3)$ respectively. Let $\hat{\Pi} \colon L^2(\mathbb{S}^3) \to \mathcal{H}$ and $\hat{\Pi}_1 \colon L^2_{(0,1)}(\mathbb{S}^3) \to \mathcal{H}_1$ be orthogonal projections onto these closed subspaces. They are continuous linear operators called the Szeg\"o projections. Now by the previous proposition, there exist continuous linear operators (called the relative solution operators) $$\hat{K}_1 \colon L^2_{(0,1)}(\mathbb{S}^3) \to \text{Dom}(\hdbarb \colon L^2(\mathbb{S}^3) \to L^2_{(0,1)}(\mathbb{S}^3)) \subseteq L^2(\mathbb{S}^3)$$ and $$\hat{K} \colon L^2(\mathbb{S}^3) \to \text{Dom}(\hdbarb^* \colon L^2_{(0,1)}(\mathbb{S}^3) \to L^2(\mathbb{S}^3)) \subseteq L^2_{(0,1)}(\mathbb{S}^3)$$ so that
$$\hdbarb \hat{K}_1 = I - \hat{\Pi}_1, \quad \text{and} \quad \hat{\Pi} \hat{K}_1 = 0 = \hat{K}_1 \hat{\Pi}_1 \quad \text{on $L^2_{(0,1)}(\mathbb{S}^3)$},$$ and $$\hdbarb^* \hat{K} = I - \hat{\Pi} \quad \text{and} \quad \hat{\Pi}_1 \hat{K} = 0 = \hat{K} \hat{\Pi} \quad \text{on $L^2(\mathbb{S}^3)$}.$$ 
Furthermore, it is known that $\hat{\Pi}$, $\hat{\Pi}_1$, $\hat{K}$ and $\hat{K}_1$ are pseudolocal operators: in particular, if $f$ is a smooth function on $\mathbb{S}^3$, then $\hat{\Pi} f$ and $\hat{K} f$ are smooth on $\mathbb{S}^3$; if $\alpha$ is a smooth $(0,1)$ form on $\mathbb{S}^3$, then $\hat{\Pi}_1 \alpha$ and $\hat{K}_1 \alpha$ are smooth on $\mathbb{S}^3$. 

Using this, we can prove:
\begin{lemma} \label{lem:domhdbarb*}
If $\alpha \in L^2_{(0,1)}(\mathbb{S}^3)$, then $\alpha$ is in the domain of $\hdbarb^* \colon L^2_{(0,1)}(\mathbb{S}^3) \to L^2(\mathbb{S}^3)$, if and only if there exists a sequence of smooth $(0,1)$ forms $\alpha_j$ on $\mathbb{S}^3$, such that $$\alpha_j \to \alpha \text{ in $L^2_{(0,1)}(\mathbb{S}^3)$}, \quad \text{and} \quad \hdbarb^* \alpha_j \to g \text{ in $L^2(\mathbb{S}^3)$}$$ for some $g \in L^2(\mathbb{S}^3)$. In that case, $\hdbarb^* \alpha = g$.
\end{lemma}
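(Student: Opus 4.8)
The plan is to show that the smooth $(0,1)$ forms constitute a core for the Hilbert space adjoint $\hdbarb^* \colon L^2_{(0,1)}(\mathbb{S}^3) \to L^2(\mathbb{S}^3)$. One direction is immediate: a smooth $(0,1)$ form $\sigma$ always lies in the domain of this adjoint, with $\hdbarb^* \sigma$ equal to the function obtained by applying the formal adjoint to $\sigma$; indeed, for $u$ in the domain of $\hdbarb \colon L^2 \to L^2_{(0,1)}$ one approximates $u$ by smooth functions $u_k$ with $\hdbarb u_k \to \hdbarb u$, integrates by parts in $\langle \sigma, \hdbarb u_k\rangle$, and passes to the limit. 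Hence, if smooth forms $\alpha_j \to \alpha$ with $\hdbarb^* \alpha_j \to g$ in $L^2$, the closedness of the Hilbert space adjoint forces $\alpha \in \mathrm{Dom}(\hdbarb^*)$ and $\hdbarb^* \alpha = g$. It remains to produce, for a given $\alpha \in \mathrm{Dom}(\hdbarb^*)$ with $g := \hdbarb^* \alpha$, a sequence of smooth forms converging to $\alpha$ in $L^2_{(0,1)}$ whose images under $\hdbarb^*$ converge to $g$.

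The first main step is to establish the Hodge-type identity
$$\alpha = \hat{\Pi}_1 \alpha + \hat{K} g.$$
To see this, note that $\hat{\Pi}_1 \alpha \in \mathcal{H}_1 = \ker \hdbarb^*$, so both sides lie in $\mathrm{Dom}(\hdbarb^*)$. Applying $\hdbarb^*$ to the right-hand side and using $\hdbarb^* \hat{K} = I - \hat{\Pi}$ gives $(I - \hat{\Pi}) g$; since $\langle g, h\rangle = \langle \hdbarb^* \alpha, h\rangle = \langle \alpha, \hdbarb h\rangle = 0$ for every CR function $h \in \mathcal{H}$, we have $\hat{\Pi} g = 0$ and hence $\hdbarb^*(\hat{\Pi}_1 \alpha + \hat{K} g) = g = \hdbarb^* \alpha$. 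Thus the difference $\eta := \alpha - \hat{\Pi}_1 \alpha - \hat{K} g$ lies in $\mathcal{H}_1$; applying $\hat{\Pi}_1$ and using $\hat{\Pi}_1^2 = \hat{\Pi}_1$, $\hat{\Pi}_1 \hat{K} = 0$, and $\hat{\Pi}_1 \eta = \eta$ shows $\eta = 0$.

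With this decomposition in hand, the second step is to approximate each summand separately using the pseudolocality of the Szeg\"o projection and the relative solution operator. Choosing smooth functions $g_k \to g$ in $L^2(\mathbb{S}^3)$, the forms $\hat{K} g_k$ are smooth, converge to $\hat{K} g$ in $L^2_{(0,1)}$, and satisfy $\hdbarb^*(\hat{K} g_k) = (I - \hat{\Pi}) g_k \to (I - \hat{\Pi}) g = g$. Choosing smooth forms $\beta_k \to \hat{\Pi}_1 \alpha$ in $L^2_{(0,1)}$, the forms $\hat{\Pi}_1 \beta_k$ are smooth, lie in $\mathcal{H}_1$ so that $\hdbarb^*(\hat{\Pi}_1 \beta_k) = 0$, and converge to $\hat{\Pi}_1 \alpha$. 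Setting $\alpha_k := \hat{\Pi}_1 \beta_k + \hat{K} g_k$ then yields smooth forms with $\alpha_k \to \alpha$ in $L^2_{(0,1)}$ and $\hdbarb^* \alpha_k \to g$ in $L^2$, as required.

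The routine parts are the integration by parts establishing the easy direction, together with the continuity and pseudolocality of $\hat{\Pi}$, $\hat{\Pi}_1$, and $\hat{K}$. The step carrying the real content is the Hodge decomposition $\alpha = \hat{\Pi}_1 \alpha + \hat{K} g$, which is where the closed range property enters through the solution operator $\hat{K}$. I expect the main point requiring care to be the bookkeeping that guarantees the approximating sequence converges simultaneously in $L^2_{(0,1)}$ and, after applying $\hdbarb^*$, in $L^2$ — that is, convergence in the graph norm — which is precisely what the pseudolocality of the solution operators secures.
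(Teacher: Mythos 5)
Your proof is correct and takes essentially the same route as the paper: both construct the approximating sequence $\alpha_k = \hat{K} g_k + \hat{\Pi}_1 \beta_k$ from smooth approximants of $g = \hdbarb^* \alpha$ and of $\alpha$, relying on the pseudolocality and continuity of $\hat{K}$ and $\hat{\Pi}_1$, the identities $\hdbarb^* \hat{K} = I - \hat{\Pi}$ and $\hat{\Pi}_1 \hat{K} = 0$, the orthogonality $\hat{\Pi}\, \hdbarb^* \alpha = 0$, and the closedness of the Hilbert space adjoint for the converse direction. The only difference is that you explicitly verify the decomposition $\alpha = \hat{\Pi}_1 \alpha + \hat{K}\, \hdbarb^* \alpha$, which the paper uses without comment in the line $\hat{K} u + \hat{\Pi}_1 \alpha = (I - \hat{\Pi}_1)\alpha + \hat{\Pi}_1 \alpha$.
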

\begin{proof}
Suppose $\alpha$ is in the domain of $\hdbarb^* \colon L^2_{(0,1)}(\mathbb{S}^3) \to L^2(\mathbb{S}^3)$. Let $u = \hdbarb^* \alpha \in L^2(\mathbb{S}^3)$, and $u_j \in C^{\infty}(\mathbb{S}^3)$ be such that $u_j \to u$ in $L^2(\mathbb{S}^3)$. Let $\beta_j$ be a sequence of smooth $(0,1)$ forms such that $\beta_j \to \alpha$ in $L^2_{(0,1)}(\mathbb{S}^3)$. Then $$\alpha_j := \hat{K} u_j + \hat{\Pi}_1 \beta_j$$ are smooth $(0,1)$ forms, $$\lim_{L^2} \alpha_j = \hat{K} u + \hat{\Pi}_1 \alpha = (I - \hat{\Pi}_1) \alpha + \hat{\Pi}_1 \alpha = \alpha,$$ and $$\lim_{L^2} \hdbarb^* \alpha_j = \lim_{L^2} \hdbarb^* \hat{K} u_j = \lim_{L^2} (u_j - \hat{\Pi} u_j) = u - \hat{\Pi} u = \hdbarb^* \alpha.$$ This shows half of the implication. The reverse implication follows from the fact that $\hdbarb^* \colon L^2_{(0,1)}(\mathbb{S}^3) \to L^2(\mathbb{S}^3)$ is a closed linear operator, as we noted earlier.
\end{proof}

We now have a nice characterization of the kernels $\mathcal{H}$ and $\mathcal{H}_1$ of the operators $\hdbarb \colon L^2(\mathbb{S}^3) \to L^2_{(0,1)}(\mathbb{S}^3)$ and $\hdbarb^* \colon L^2_{(0,1)}(\mathbb{S}^3) \to L^2(\mathbb{S}^3)$ respectively:

\begin{lemma} \label{lem:kernelchar}
\begin{enumerate}[(i)]
\item $u \in L^2(\mathbb{S}^3)$ is in $\mathcal{H}$, if and only if there exists a sequence $v_j \in C^{\infty}(\mathbb{S}^3)$ such that $$v_j \to u \text{ in $L^2(\mathbb{S}^3)$}, \quad \text{and} \quad \hdbarb v_j \to 0 \text{ in $L^2_{(0,1)}(\mathbb{S}^3)$}.$$ 
\item $\alpha \in L^2(\mathbb{S}^3)$ is in $\mathcal{H}_1$, if and only if there exists a sequence of smooth $(0,1)$ forms $\alpha_j$ on $\mathbb{S}^3$ such that $$\alpha_j \to \alpha \text{ in $L^2_{(0,1)}(\mathbb{S}^3)$}, \quad \text{and} \quad \hdbarb^* \alpha_j \to 0 \text{ in $L^2(\mathbb{S}^3)$}.$$ 
\end{enumerate}
\end{lemma}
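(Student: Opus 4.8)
The plan is to derive both statements almost directly from the definitions set up above, the essential difference being that part (i) is definitional while part (ii) rests on Lemma~\ref{lem:domhdbarb*}.

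For part (i), the characterization is essentially a restatement of the definition of $\mathcal{H}$. Recall that $\mathcal{H}$ is the kernel of $\hdbarb \colon L^2(\mathbb{S}^3) \to L^2_{(0,1)}(\mathbb{S}^3)$, and that this operator was defined as the closure, in the graph norm $L^2 \times L^2$, of the tangential Cauchy-Riemann operator acting on $C^{\infty}(\mathbb{S}^3)$. Thus $u \in \mathcal{H}$ precisely when $u$ lies in the domain of this closure and $\hdbarb u = 0$. By the very definition of the closure, membership in the domain with $\hdbarb u = 0$ is equivalent to the existence of a sequence $v_j \in C^{\infty}(\mathbb{S}^3)$ with $v_j \to u$ in $L^2$ and $\hdbarb v_j \to \hdbarb u = 0$ in $L^2_{(0,1)}$. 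Reading this equivalence backwards gives the reverse implication: if such a sequence exists, then $u$ is in the domain and $\hdbarb u = 0$, hence $u \in \mathcal{H}$.

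For part (ii) the situation is genuinely different, and this is where the work lies. The operator $\hdbarb^* \colon L^2_{(0,1)}(\mathbb{S}^3) \to L^2(\mathbb{S}^3)$ was defined \emph{not} as a closure of smooth forms but as the Hilbert space adjoint of $\hdbarb \colon L^2(\mathbb{S}^3) \to L^2_{(0,1)}(\mathbb{S}^3)$, so a priori there is no reason for elements of its domain to be approximable by smooth forms in the graph norm. The input that removes this obstacle is Lemma~\ref{lem:domhdbarb*}, which asserts exactly that $\alpha \in \text{Dom}(\hdbarb^*)$ if and only if there is a sequence of smooth $(0,1)$ forms $\alpha_j$ with $\alpha_j \to \alpha$ in $L^2_{(0,1)}$ and $\hdbarb^* \alpha_j \to g$ in $L^2$, with $\hdbarb^* \alpha = g$ in that case; its proof already in hand uses the closed range of $\hdbarb^*$ together with the pseudolocality of $\hat{K}$ and $\hat{\Pi}_1$. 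Granting this, I would argue just as in part (i): if $\alpha \in \mathcal{H}_1$, then $\alpha$ is in the domain of $\hdbarb^*$ with $\hdbarb^* \alpha = 0$, so Lemma~\ref{lem:domhdbarb*} produces smooth $(0,1)$ forms $\alpha_j \to \alpha$ with $\hdbarb^* \alpha_j \to \hdbarb^* \alpha = 0$; conversely, any such approximating sequence forces $\alpha$ into the domain of $\hdbarb^*$ with $\hdbarb^* \alpha = 0$, that is, $\alpha \in \mathcal{H}_1$.

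The only genuine obstacle is therefore the passage from the abstract adjoint definition of $\hdbarb^*$ to an approximation by smooth forms, and this has been settled in advance by Lemma~\ref{lem:domhdbarb*}. Once that lemma is available, both (i) and (ii) follow immediately by unwinding the definitions of $\mathcal{H}$ and $\mathcal{H}_1$ as kernels of the respective closed operators.
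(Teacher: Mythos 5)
Your proposal is correct and follows exactly the paper's own route: part (i) is read off from the definition of $\hdbarb \colon L^2(\mathbb{S}^3) \to L^2_{(0,1)}(\mathbb{S}^3)$ as a graph closure, and part (ii) is reduced to Lemma~\ref{lem:domhdbarb*}, which is precisely how the paper disposes of both parts. Your write-up merely makes explicit the unwinding of definitions (and the point that the adjoint, unlike the closure, needs the smooth-approximation lemma), which the paper leaves implicit.
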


\begin{proof}
The proof of the first part of the lemma is immediate from the definition of $\hdbarb \colon L^2(\mathbb{S}^3) \to L^2_{(0,1)}(\mathbb{S}^3)$. The second part of the lemma follows from Lemma~\ref{lem:domhdbarb*}.
\end{proof}

We also have:
\begin{lemma} \label{lem:kerneldbarb}
Suppose $u \in L^2(\mathbb{S}^3)$ is in $\mathcal{H}$. Then there exists $u_j \in C^{\infty}_c(\mathbb{S}^3 \setminus \{p\})$ such that $$u_j \to u \text{ in $L^2(\mathbb{S}^3)$}, \quad \text{and} \quad \hZbar u_j \to 0 \text{ in $L^{4/3}(\mathbb{S}^3)$}.$$ It follows that $h^{-2} u \in L^2(\mathbb{H}^1)$ is in the kernel of $\Zbar$.
\end{lemma}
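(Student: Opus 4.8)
The plan is to start from the characterization of $\mathcal{H}$ in Lemma~\ref{lem:kernelchar}(i) and then remove the point $p$ by the same cut-off device used in Lemma~\ref{lem:domainhdbarb}, keeping careful track of which $L^p$ norms survive the cut-off. Since $u \in \mathcal{H}$, Lemma~\ref{lem:kernelchar}(i) gives $v_j \in C^\infty(\mathbb{S}^3)$ with $v_j \to u$ in $L^2(\mathbb{S}^3)$ and $\hdbarb v_j \to 0$ in $L^2_{(0,1)}(\mathbb{S}^3)$. Because $\hdbarb v_j = (\hZbar v_j)\homegabar$ and the pointwise length $(\homegabar,\homegabar)_{\htheta}$ is a smooth function bounded above and below on the compact manifold $\mathbb{S}^3$, this is the same as saying $\hZbar v_j \to 0$ in $L^2(\mathbb{S}^3)$. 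I would then set $u_j := (1-\chi_j)v_j$, where $\chi_j$ is exactly the cut-off from the proof of Lemma~\ref{lem:domainhdbarb}: identically $1$ on the non-isotropic ball of radius $\varepsilon_j$ about $p$, supported in the ball of radius $2\varepsilon_j$, with $\|\hZbar\chi_j\|_{L^\infty}\le C\varepsilon_j^{-1}$, and $\varepsilon_j\to0$ chosen rapidly enough that $\|v_j\|_{L^2(\operatorname{supp}\chi_j)}\to0$ (possible since $v_j\to u$ in $L^2$ and $\operatorname{supp}\chi_j$ shrinks to the null set $\{p\}$). Then $u_j\in C^\infty_c(\mathbb{S}^3\setminus\{p\})$, and $\|u_j-u\|_{L^2}\le \|v_j-u\|_{L^2}+\|v_j\|_{L^2(\operatorname{supp}\chi_j)}\to0$.

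The crux is the estimate of $\hZbar u_j$ in $L^{4/3}$. Writing $\hZbar u_j=(1-\chi_j)\hZbar v_j-(\hZbar\chi_j)v_j$, the first term is controlled by $\|\hZbar v_j\|_{L^{4/3}}\le C\|\hZbar v_j\|_{L^2}\to0$, using that $\mathbb{S}^3$ has finite volume so that $L^2\hookrightarrow L^{4/3}$. For the second term I would use H\"older with the splitting $\tfrac34=\tfrac14+\tfrac12$, giving $\|(\hZbar\chi_j)v_j\|_{L^{4/3}}\le \|\hZbar\chi_j\|_{L^4}\,\|v_j\|_{L^2(\operatorname{supp}\chi_j)}$. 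Here is the decisive point: $\hZbar\chi_j$ is supported in a ball of volume $\sim\varepsilon_j^4$ (the homogeneous dimension of this CR manifold is $4$) and is bounded by $C\varepsilon_j^{-1}$, so $\|\hZbar\chi_j\|_{L^4}\le C\varepsilon_j^{-1}(\varepsilon_j^4)^{1/4}=C$ uniformly in $j$. Thus $\|(\hZbar\chi_j)v_j\|_{L^{4/3}}\le C\|v_j\|_{L^2(\operatorname{supp}\chi_j)}\to0$, and $\hZbar u_j\to0$ in $L^{4/3}(\mathbb{S}^3)$. This exact balance between the $\varepsilon_j^{-1}$ loss from differentiating the cut-off and the $\varepsilon_j$ gain from the small $L^4$-support is precisely why the triple $(L^2,L^4,L^{4/3})$ is forced on us, and it is the main thing to get right.

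Finally I would transfer this to $\mathbb{H}^1$ by the conformal dictionary. Since $h$ is CR ($\Zbar h=0$) and nonvanishing on $\mathbb{S}^3\setminus\{p\}$, we have $\Zbar(h^{-2}u_j)=h^{-2}\Zbar u_j=h^{-2}G^{-1}\hZbar u_j$ on $\mathbb{H}^1$, using $\hZbar=G\Zbar$, and each $h^{-2}u_j=(1+\zeta_2)^2u_j$ lies in $C^\infty_c(\mathbb{H}^1)$. Applying the identities $\|f\|_{L^2(\mathbb{H}^1)}=\|G^2f\|_{L^2(\mathbb{S}^3)}$ and $\|f\|_{L^{4/3}(\mathbb{H}^1)}=\|G^3f\|_{L^{4/3}(\mathbb{S}^3)}$, together with $G=|h|$ so that $|G^2h^{-2}|=|h|^2|h|^{-2}=1$, gives the two clean identities
$$\|h^{-2}u_j-h^{-2}u\|_{L^2(\mathbb{H}^1)}=\|u_j-u\|_{L^2(\mathbb{S}^3)},\qquad \|\Zbar(h^{-2}u_j)\|_{L^{4/3}(\mathbb{H}^1)}=\|\hZbar u_j\|_{L^{4/3}(\mathbb{S}^3)}.$$
The right-hand sides tend to $0$ by the previous steps, and $\|h^{-2}u\|_{L^2(\mathbb{H}^1)}=\|u\|_{L^2(\mathbb{S}^3)}<\infty$, so $h^{-2}u\in L^2(\mathbb{H}^1)$ and the sequence $h^{-2}u_j\in C^\infty_c(\mathbb{H}^1)$ witnesses that $h^{-2}u$ lies in the kernel of $\Zbar$, by the definition of that kernel given in the introduction. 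The only genuinely delicate step is the uniform $L^4$ bound on $\hZbar\chi_j$ described above; everything else is bookkeeping with the conformal weights $G$ and the modulus-one factor $G^2h^{-2}=\bar h/h$.
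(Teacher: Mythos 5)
Your proposal is correct and follows essentially the same route as the paper: the cut-off $u_j=(1-\chi_j)v_j$ applied to the approximating sequence from Lemma~\ref{lem:kernelchar}(i), the H\"older splitting $\|(\hZbar\chi_j)v_j\|_{L^{4/3}}\le\|\hZbar\chi_j\|_{L^4}\|v_j\|_{L^2(\operatorname{supp}\chi_j)}$, and the transfer to $\mathbb{H}^1$ via the unimodular factor $G^2h^{-2}$ and the identity $\hZbar=G\Zbar$. The only difference is that you spell out the volume computation behind the uniform bound $\|\hZbar\chi_j\|_{L^4}\le C$, which the paper uses implicitly.
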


\begin{proof}
The first statement of the lemma follows by letting $u_j = (1-\chi_j) v_j$, where $v_j$ are as in part (i) of Lemma~\ref{lem:kernelchar}, and $\chi_j \in C^{\infty}(\mathbb{S}^3)$ are as in the proof of Lemma~\ref{lem:domainhdbarb}, except that now $\varepsilon_j$ are chosen such that $\|v_j\|_{L^2(\text{supp } \chi_j)} \to 0$ as $j \to \infty$. Note that 
\begin{align*}
\|u_j - u\|_{L^2} 
&\leq \|v_j - u\|_{L^2} + \|\chi_j v_j\|_{L^2} \\
&\leq \|v_j - u\|_{L^2} + C \|v_j\|_{L^2(\text{supp }\chi_j)} \to 0 \quad \text{as $j \to \infty$},
\end{align*}
and
\begin{align*}
\|\hZbar u_j\|_{L^{4/3}} 
&\leq \|(1-\chi_j) \hZbar v_j\|_{L^{4/3}} + \|(\hZbar \chi_j) v_j \|_{L^{4/3}} \\
&\leq C\|\hdbarb v_j\|_{L^2_{(0,1)}} + \|\hZbar \chi_j\|_{L^4} \|v_j\|_{L^2(\text{supp }\chi_j)} \\
&\leq C\|\hdbarb v_j\|_{L^2_{(0,1)}} + C \|v_j\|_{L^2(\text{supp }\chi_j)} \to 0
\quad \text{as $j \to \infty$}. 
\end{align*}

To see the second part of the lemma, note that if $u_j$ are as in the statement of the lemma, then $h^{-2} u_j \in C^{\infty}_c(\mathbb{H}^1)$, $$h^{-2} u_j \to h^{-2} u \text{ in $L^2(\mathbb{H}^1)$}, \quad \text{and} \quad \Zbar (h^{-2} u_j) = G^{-1} h^{-2} \hZbar u_j \to 0 \text{ in $L^{4/3}(\mathbb{H}^1)$}.$$ (We used (\ref{eq:hZbar}) in the equality in the previous line.) This is because 
$$\|h^{-2} u_j - h^{-2} u\|_{L^2(\mathbb{H}^1)} = \| u_j - u\|_{L^2(\mathbb{S}^3)} \quad \text{and} \quad \|G^{-1} h^{-2} \hZbar u_j \|_{L^{4/3}(\mathbb{H}^1)} = \|\hZbar u_j \|_{L^{4/3}(\mathbb{S}^3)}.$$ The claim then follows.
\end{proof}

Similarly, we have
\begin{lemma} \label{lem:kerneldbarb*}
Suppose $g \homegabar \in L^2(\mathbb{S}^3)$ is in $\mathcal{H}_1$. Then there exists $g_j \in C^{\infty}_c(\mathbb{S}^3 \setminus \{p\})$ such that $$g_j \to g \text{ in $L^2(\mathbb{S}^3)$}, \quad \text{and} \quad \hZbar^* g_j \to 0 \text{ in $L^{4/3}(\mathbb{S}^3)$}.$$ It follows that $\bar{h} G^{-3} g$ is in the kernel of $\Zbar^*$.
\end{lemma}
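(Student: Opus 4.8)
The plan is to mirror the proof of Lemma~\ref{lem:kerneldbarb}, exchanging the roles of $\hZbar$ and $\hZbar^*$ and using the identity (\ref{eq:hZbar*}) with $k=1$ in place of the CR condition $\Zbar h = 0$. Since $g\homegabar \in \mathcal{H}_1$, part (ii) of Lemma~\ref{lem:kernelchar} supplies smooth $(0,1)$ forms $\alpha_j$ on $\mathbb{S}^3$ with $\alpha_j \to g\homegabar$ in $L^2_{(0,1)}(\mathbb{S}^3)$ and $\hdbarb^* \alpha_j \to 0$ in $L^2(\mathbb{S}^3)$. Because $\hZbar = G\Zbar$ has constant length with respect to $\htheta$, it extends to a smooth, nowhere-vanishing $(0,1)$ vector field across $p$, so $\homegabar$ is a global smooth unit $(0,1)$ coframe on $\mathbb{S}^3$ and I may write $\alpha_j = w_j\homegabar$ with $w_j \in C^{\infty}(\mathbb{S}^3)$. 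Then $w_j \to g$ in $L^2(\mathbb{S}^3)$, and, since integration by parts on the compact boundaryless $\mathbb{S}^3$ produces no boundary terms, $\hdbarb^*(w_j\homegabar) = \hZbar^* w_j$; hence $\hZbar^* w_j \to 0$ in $L^2(\mathbb{S}^3)$. Finally I set $g_j := (1-\chi_j)w_j \in C^{\infty}_c(\mathbb{S}^3 \setminus \{p\})$, where $\chi_j$ is the cutoff from the proof of Lemma~\ref{lem:domainhdbarb}, with $\varepsilon_j \to 0$ chosen so rapidly that $\|w_j\|_{L^2(\mathrm{supp}\,\chi_j)} \to 0$.

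For the two required convergences I would argue exactly as before. The estimate $\|g_j - g\|_{L^2} \le \|w_j - g\|_{L^2} + C\|w_j\|_{L^2(\mathrm{supp}\,\chi_j)} \to 0$ is identical to the one in Lemma~\ref{lem:kerneldbarb}. For $\hZbar^* g_j$ I use that $\hZbar^*$ is a first-order operator whose top-order part is $-\hat{Z}$, with $\hat{Z} = \overline{\hZbar}$, so the adjoint Leibniz rule gives $\hZbar^* g_j = (1-\chi_j)\hZbar^* w_j + (\hat{Z}\chi_j)w_j$. Then $\|\hZbar^* g_j\|_{L^{4/3}} \le \|\hZbar^* w_j\|_{L^{4/3}} + \|\hat{Z}\chi_j\|_{L^4}\|w_j\|_{L^2(\mathrm{supp}\,\chi_j)}$; the first term tends to $0$ because $\hZbar^* w_j \to 0$ in $L^2$ and $\mathbb{S}^3$ has finite volume (so $L^2 \hookrightarrow L^{4/3}$), while the second tends to $0$ because $\|\hat{Z}\chi_j\|_{L^4} \le C$ (same volume computation as before, balls of radius $\varepsilon_j$ having volume $\sim \varepsilon_j^4$) and $\|w_j\|_{L^2(\mathrm{supp}\,\chi_j)} \to 0$. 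This establishes the first assertion of the lemma.

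For the second assertion I transfer everything to $\mathbb{H}^1$ through $\bar{h} G^{-3}$. The functions $\bar{h} G^{-3} g_j$ lie in $C^{\infty}_c(\mathbb{H}^1)$, and from (\ref{eq:hZbar*}) with $k=1$ I read off $\Zbar^*(\bar{h} G^{-3} g_j) = \bar{h} G^{-4} \hZbar^* g_j$. Invoking the norm identities $\|f\|_{L^p(\mathbb{H}^1)} = \|G^{4/p} f\|_{L^p(\mathbb{S}^3)}$ together with $|\bar{h}| = |h| = G$, I get $\|\bar{h} G^{-3}(g_j - g)\|_{L^2(\mathbb{H}^1)} = \|g_j - g\|_{L^2(\mathbb{S}^3)} \to 0$ and $\|\Zbar^*(\bar{h} G^{-3} g_j)\|_{L^{4/3}(\mathbb{H}^1)} = \|\hZbar^* g_j\|_{L^{4/3}(\mathbb{S}^3)} \to 0$. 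Thus $\bar{h} G^{-3} g_j \to \bar{h} G^{-3} g$ in $L^2(\mathbb{H}^1)$ while $\Zbar^*(\bar{h} G^{-3} g_j) \to 0$ in $L^{4/3}(\mathbb{H}^1)$, which is precisely the definition of $\bar{h} G^{-3} g$ belonging to the kernel of $\Zbar^*$.

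The step I expect to need the most care is the cutoff estimate for $\hZbar^* g_j$. Unlike the $\dbarb$ case in Lemma~\ref{lem:kerneldbarb}, where $\hZbar$ is an honest vector field and the Leibniz rule is immediate, here $\hZbar^*$ carries a zeroth-order term. The point to verify is that this zeroth-order coefficient multiplies $(1-\chi_j)w_j$ undifferentiated and therefore cancels out of the commutator with the cutoff, leaving only the clean term $(\hat{Z}\chi_j)w_j$; this is what makes the $L^4$ bound on $\hat{Z}\chi_j$ (rather than anything worse) suffice. The two supporting facts I should also pin down are the identification $\hdbarb^*(w_j\homegabar) = \hZbar^* w_j$ for $w_j$ smooth on all of $\mathbb{S}^3$ (which rests on $\hZbar$ and $\homegabar$ extending smoothly across $p$, so that no boundary contribution appears at $p$) and the elementary embedding $L^2 \hookrightarrow L^{4/3}$ on the finite-measure space $\mathbb{S}^3$.
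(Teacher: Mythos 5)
Your overall route is the paper's own: start from Lemma~\ref{lem:kernelchar}(ii), cut off near $p$ with the $\chi_j$ of Lemma~\ref{lem:domainhdbarb}, and transfer to $\mathbb{H}^1$ via (\ref{eq:hZbar*}) with $k=1$. But the supporting fact you flagged as needing to be pinned down is in fact \emph{false}: $\hZbar$ does not extend continuously (let alone smoothly) across $p$, and $\homegabar$ is not a global coframe on $\mathbb{S}^3$. Constant length with respect to $\htheta$ controls only the modulus of the frame, not its phase. Explicitly, pushing $\Zbar$ through the Cayley transform and using $|\zeta_1|^2+|\zeta_2|^2=1$ gives
\begin{equation*}
\hZbar \;=\; G\Zbar \;=\; \Bigl(\tfrac{h}{|h|}\Bigr)^{3}\,\Bigl(\zeta_2\tfrac{\partial}{\partial \bar{\zeta}_1} - \zeta_1\tfrac{\partial}{\partial \bar{\zeta}_2}\Bigr)
\qquad \text{on } \mathbb{S}^3\setminus\{p\},
\end{equation*}
where $\zeta_2\,\partial/\partial\bar{\zeta}_1 - \zeta_1\,\partial/\partial\bar{\zeta}_2$ is indeed a global, smooth, nowhere-vanishing $(0,1)$ field on $\mathbb{S}^3$, but the unimodular factor $(h/|h|)^3 = e^{-3i\arg(1+\zeta_2)}$ has no limit as $\zeta \to p$: its argument depends on the direction of approach. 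This phase obstruction is precisely the reason the paper twists by powers of $h$ and $\bar{h}$ throughout (the factor $\bar{h}G^{-3}$ in this very lemma, $h^{-2}$ in Lemma~\ref{lem:kerneldbarb}); if $\homegabar$ were a global smooth coframe, those factors would be pointless. Consequently you cannot write $\alpha_j = w_j\homegabar$ with $w_j \in C^{\infty}(\mathbb{S}^3)$, and your justification of $\hdbarb^*(w_j\homegabar) = \hZbar^* w_j$ via "no boundary contribution at $p$" rests on the same false premise.

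The gap is repairable without changing your structure, and the repaired argument is the paper's. Write $\alpha_j = w_j \homegabar$ only on $\mathbb{S}^3\setminus\{p\}$, so $w_j \in C^{\infty}(\mathbb{S}^3\setminus\{p\})$; although $w_j$ itself need not extend across $p$, the two quantities your estimates actually use do extend: $|w_j| = |(\alpha_j,\alpha_j)_{\htheta}|^{1/2}$ and $|\hZbar^* w_j| = |\hdbarb^*\alpha_j|$ are continuous on all of $\mathbb{S}^3$, hence bounded, so one can still choose $\varepsilon_j$ with $\|w_j\|_{L^2(\mathrm{supp}\,\chi_j)} \to 0$, and still bound $\|(1-\chi_j)\hZbar^* w_j\|_{L^{4/3}} \leq C\|\hdbarb^*\alpha_j\|_{L^2} \to 0$. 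The identity $\hdbarb^*(w_j\homegabar) = \hZbar^* w_j$ on $\mathbb{S}^3\setminus\{p\}$ needs no global integration by parts at all: both sides are first-order differential operators that agree on $C^{\infty}_c(\mathbb{S}^3\setminus\{p\})$, hence agree pointwise on $\mathbb{S}^3\setminus\{p\}$ by locality; moreover it is only ever applied where $g_j = (1-\chi_j)w_j$ lives, i.e.\ away from $p$. With these substitutions, the rest of what you wrote — the adjoint Leibniz rule (the zeroth-order term of $\hZbar^*$ indeed commutes with the cutoff), the H\"older estimate with $\|\hat{Z}\chi_j\|_{L^4} \leq C$, the embedding $L^2 \hookrightarrow L^{4/3}$ on the finite-measure $\mathbb{S}^3$, and the transfer $\Zbar^*(\bar{h}G^{-3}g_j) = \bar{h}G^{-4}\hZbar^* g_j$ with the two norm identities — is correct and coincides with the proof the paper intends.
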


\begin{proof}
The proof of the lemma parallels that of Lemma~\ref{lem:kerneldbarb}.  One only needs to use (\ref{eq:hZbar*}) with $k = 1$ instead wherever we used (\ref{eq:hZbar}).
\end{proof}

We now come back to the operators $\hdbarb \colon L^4(\mathbb{S}^3) \to L^2_{(0,1)}(\mathbb{S}^3)$ and $\hdbarb^* \colon L^4_{(0,1)}(\mathbb{S}^3) \to L^2(\mathbb{S}^3)$ we studied at the beginning of this section, and discuss the solvability of these operators.

\begin{lemma} \label{lem:hZbar}
Suppose $\alpha \in L^2_{(0,1)}(\mathbb{S}^3)$ and is orthogonal to $\mathcal{H}_1$ in $L^2_{(0,1)}(\mathbb{S}^3)$. Then there is a function $u$ in the domain of $\hdbarb \colon L^4(\mathbb{S}^3) \to L^2_{(0,1)}(\mathbb{S}^3)$ such that $\hdbarb u = \alpha$. In particular, the range of $\hdbarb \colon L^4(\mathbb{S}^3) \to L^2_{(0,1)}(\mathbb{S}^3)$ is closed in $L^2_{(0,1)}(\mathbb{S}^3)$.
\end{lemma}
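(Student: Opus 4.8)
The plan is to take $u := \hat{K}_1 \alpha$ as the candidate and verify that it has all the required properties. Since $\alpha$ is orthogonal to $\mathcal{H}_1$, we have $\hat{\Pi}_1 \alpha = 0$, so the identity $\hdbarb \hat{K}_1 = I - \hat{\Pi}_1$ on $L^2_{(0,1)}(\mathbb{S}^3)$ immediately gives $\hdbarb u = (I - \hat{\Pi}_1)\alpha = \alpha$, where here $\hdbarb$ denotes the $L^2 \to L^2$ closure. Thus $u$ already solves the equation in the weaker sense; the real content is to upgrade this to a solution for the $L^4 \to L^2$ closure, which by Lemma~\ref{lem:domainhdbarb} amounts to producing a smooth approximating sequence with the right convergence.

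The key analytic input I would invoke is the $L^p$ regularity of the relative solution operator. Since $\mathbb{S}^3$ has homogeneous dimension $Q = 4$ and $\hat{K}_1$ gains one derivative in the subelliptic (Folland--Stein) scale, the subelliptic Sobolev embedding $S^{1,2} \hookrightarrow L^4$ (with $\tfrac14 = \tfrac12 - \tfrac1Q$) shows that $\hat{K}_1 \colon L^2_{(0,1)}(\mathbb{S}^3) \to L^4(\mathbb{S}^3)$ is bounded; this is precisely the $L^p$ theory of Greiner--Stein and Nagel--Stein recalled earlier. In particular $u = \hat{K}_1 \alpha \in L^4(\mathbb{S}^3)$.

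To build the approximants, I would pick smooth $(0,1)$ forms $\alpha_j$ with $\alpha_j \to \alpha$ in $L^2_{(0,1)}(\mathbb{S}^3)$ and set $v_j := \hat{K}_1 \alpha_j$. By pseudolocality of $\hat{K}_1$, each $v_j$ is smooth on $\mathbb{S}^3$. Applying the boundedness $\hat{K}_1 \colon L^2_{(0,1)} \to L^4$ to $\alpha_j - \alpha$ gives $v_j \to u$ in $L^4(\mathbb{S}^3)$, while the classical $\hdbarb v_j = (I-\hat{\Pi}_1)\alpha_j$ converges to $(I-\hat{\Pi}_1)\alpha = \alpha$ in $L^2_{(0,1)}(\mathbb{S}^3)$ by continuity of $\hat{\Pi}_1$ together with $\hat{\Pi}_1 \alpha = 0$. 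Condition (b) of Lemma~\ref{lem:domainhdbarb} is therefore met, so $u$ lies in the domain of $\hdbarb \colon L^4(\mathbb{S}^3) \to L^2_{(0,1)}(\mathbb{S}^3)$ and $\hdbarb u = \alpha$. For the closedness of the range, I would show it equals the closed subspace $\mathcal{H}_1^{\perp}$: one inclusion is exactly what we just proved, and for the reverse, any $u$ in the $L^4$-domain is automatically in the $L^2$-domain (on the compact $\mathbb{S}^3$, $L^4 \hookrightarrow L^2$, so any sequence converging in $L^4$ converges in $L^2$), with the two values of $\hdbarb u$ agreeing; hence the $L^4$-range sits inside the range of $\hdbarb \colon L^2 \to L^2$, which by the preceding proposition is the closed space $\mathcal{H}_1^{\perp}$.

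I expect the substantive ingredient to be the boundedness $\hat{K}_1 \colon L^2_{(0,1)} \to L^4$, which encodes exactly the gain of one derivative on a space of homogeneous dimension $4$; the remainder is bookkeeping reconciling the two closures via Lemma~\ref{lem:domainhdbarb} and the continuity of the Szeg\"o projection. The only mild care needed is to ensure that the classical $\hdbarb v_j$ of the smooth function $v_j$ coincides with the $L^2$-closure value $(I-\hat{\Pi}_1)\alpha_j$, which holds because $v_j$ is smooth.
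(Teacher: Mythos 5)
Your proposal is correct and follows essentially the same route as the paper: setting $u := \hat{K}_1 \alpha$, invoking the $L^2_{(0,1)} \to L^4$ boundedness of $\hat{K}_1$ from the $L^p$ theory, using pseudolocality to produce the smooth approximants $v_j = \hat{K}_1 \alpha_j$, and identifying the range with $\mathcal{H}_1^{\perp}$. Your explicit verification of the reverse inclusion (that the $L^4$-range lies in $\mathcal{H}_1^{\perp}$, via $L^4(\mathbb{S}^3) \hookrightarrow L^2(\mathbb{S}^3)$ on the compact sphere) fills in a step the paper leaves implicit with ``it then follows,'' and is a welcome addition.
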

\begin{proof}
The key here is the $L^p$ theory of $\hat{K}_1$, which shows that $\hat{K}_1$ extends to a bounded linear operator from $L^2_{(0,1)}(\mathbb{S}^3) \to L^4(\mathbb{S}^3)$. Thus if $\alpha$ is as in the lemma, then $u:=\hat{K}_1 \alpha$ is in $L^4(\mathbb{S}^3)$. Furthermore, let $\alpha_j$ be a sequence of smooth $(0,1)$ forms such that $\alpha_j \to \alpha$ in $L^2_{(0,1)}(\mathbb{S}^3)$. Then $v_j := \hat{K}_1 \alpha_j \in C^{\infty}(\mathbb{S}^3)$, $v_j \to u$ in $L^4(\mathbb{S}^3)$, and $\hdbarb v_j = \alpha_j - \hat{\Pi}_1 \alpha_j \to \alpha - \hat{\Pi}_1 \alpha = \alpha$ in $L^2_{(0,1)}(\mathbb{S}^3)$. This completes the proof of the first statement in the lemma. It then follows that the range of $\hdbarb \colon L^4(\mathbb{S}^3) \to L^2_{(0,1)}(\mathbb{S}^3)$ is the orthogonal complement of $\mathcal{H}_1$ in $L^2_{(0,1)}(\mathbb{S}^3)$, which is a closed subspace of $L^2_{(0,1)}(\mathbb{S}^3)$.
\end{proof}

Similarly, using the $L^p$ theory of $\hat{K}$ instead, we have
\begin{lemma} \label{lem:hZbar*}
Suppose $f \in L^2(\mathbb{S}^3)$ and is orthogonal to $\mathcal{H}$ in $L^2(\mathbb{S}^3)$. Then there exists a $(0,1)$ form $\alpha$ in the domain of $\hdbarb^* \colon L^4_{(0,1)}(\mathbb{S}^3) \to L^2(\mathbb{S}^3)$ such that $\hdbarb^* \alpha = f$. In particular, the range of $\hdbarb^* \colon L^4_{(0,1)}(\mathbb{S}^3) \to L^2(\mathbb{S}^3)$ is closed in $L^2(\mathbb{S}^3)$.
\end{lemma}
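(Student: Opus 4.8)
The plan is to run the proof of Lemma~\ref{lem:hZbar} with the roles of $\hdbarb$ and $\hdbarb^*$, and of $\hat{K}_1$ and $\hat{K}$, interchanged. The analytic heart of the argument is the $L^p$ theory of the relative solution operator $\hat{K}$: precisely, $\hat{K}$, which a priori is bounded from $L^2(\mathbb{S}^3)$ into $L^2_{(0,1)}(\mathbb{S}^3)$, in fact extends to a bounded linear operator $\hat{K} \colon L^2(\mathbb{S}^3) \to L^4_{(0,1)}(\mathbb{S}^3)$. This is the exact counterpart of the $L^2 \to L^4$ bound for $\hat{K}_1$ used in Lemma~\ref{lem:hZbar}; both reflect the fact that on $\mathbb{S}^3$, a space of homogeneous dimension $4$, these relative solution operators gain one non-isotropic derivative, so that the relevant Sobolev embedding sends $L^2$ into $L^4$.

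Granting this, I would first note that since $f$ is orthogonal to $\mathcal{H}$ we have $\hat{\Pi} f = 0$, so the identity $\hdbarb^* \hat{K} = I - \hat{\Pi}$ on $L^2(\mathbb{S}^3)$ gives $\hdbarb^*(\hat{K} f) = f - \hat{\Pi} f = f$. I then set $\alpha := \hat{K} f$, which lies in $L^4_{(0,1)}(\mathbb{S}^3)$ by the boundedness just quoted. It remains to verify that $\alpha$ really lies in the domain of $\hdbarb^* \colon L^4_{(0,1)}(\mathbb{S}^3) \to L^2(\mathbb{S}^3)$ with $\hdbarb^* \alpha = f$, which by Lemma~\ref{lem:domainhdbarb*} reduces to exhibiting a suitable approximating sequence of smooth $(0,1)$ forms. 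To that end I would pick $f_j \in C^{\infty}(\mathbb{S}^3)$ with $f_j \to f$ in $L^2(\mathbb{S}^3)$ and set $\alpha_j := \hat{K} f_j$. Since $\hat{K}$ is pseudolocal each $\alpha_j$ is smooth; since $\hat{K}$ is bounded $L^2 \to L^4_{(0,1)}$ we get $\alpha_j \to \alpha$ in $L^4_{(0,1)}(\mathbb{S}^3)$; and since $\hdbarb^* \hat{K} = I - \hat{\Pi}$ we get $\hdbarb^* \alpha_j = f_j - \hat{\Pi} f_j \to f - \hat{\Pi} f = f$ in $L^2(\mathbb{S}^3)$, using $\hat{\Pi} f = 0$ again. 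Lemma~\ref{lem:domainhdbarb*} then delivers both conclusions.

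For the closed range statement I would show that the range of $\hdbarb^* \colon L^4_{(0,1)}(\mathbb{S}^3) \to L^2(\mathbb{S}^3)$ coincides with the orthogonal complement $\mathcal{H}^{\perp}$ of $\mathcal{H}$ in $L^2(\mathbb{S}^3)$, which is closed. The inclusion $\mathcal{H}^{\perp} \subseteq \text{range}$ is exactly the first part of the lemma. For the reverse inclusion, suppose $\alpha \in L^4_{(0,1)}(\mathbb{S}^3)$ is in the domain with $\hdbarb^* \alpha = f$; since $\mathbb{S}^3$ has finite measure, the smooth forms from Lemma~\ref{lem:domainhdbarb*} converge to $\alpha$ in $L^2_{(0,1)}$ as well, so $\alpha$ lies in the domain of the Hilbert space operator $\hdbarb^* \colon L^2_{(0,1)}(\mathbb{S}^3) \to L^2(\mathbb{S}^3)$ with the same value $f$. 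Then for any $u \in \mathcal{H} = \ker(\hdbarb \colon L^2(\mathbb{S}^3) \to L^2_{(0,1)}(\mathbb{S}^3))$ we have $\langle f, u \rangle_{\mathbb{S}^3} = \langle \hdbarb^* \alpha, u \rangle_{\mathbb{S}^3} = \langle \alpha, \hdbarb u \rangle_{\mathbb{S}^3} = 0$, so $f \in \mathcal{H}^{\perp}$.

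I expect the main obstacle to be the $L^2 \to L^4$ boundedness of $\hat{K}$ itself; everything after that is soft functional analysis and the approximation machinery already set up in Lemma~\ref{lem:domainhdbarb*}. That estimate is not established here but imported from the $L^p$ theory of Greiner-Stein and Nagel-Stein cited in the introduction. The one point deserving care is to confirm that the relative solution operator $\hat{K}$ for $\hdbarb^*$ enjoys the same order-one smoothing as $\hat{K}_1$, so that the exponents match the graph norms $L^4 \times L^2$ used throughout and the argument is genuinely dual to that of Lemma~\ref{lem:hZbar}.
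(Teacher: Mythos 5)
Your proposal is correct and is essentially the paper's own argument: the paper disposes of this lemma with the single remark that it follows ``similarly, using the $L^p$ theory of $\hat{K}$ instead,'' i.e., by dualizing the proof of Lemma~\ref{lem:hZbar} exactly as you do --- $\hat{K}$ bounded from $L^2(\mathbb{S}^3)$ to $L^4_{(0,1)}(\mathbb{S}^3)$, pseudolocality of $\hat{K}$ to get smooth approximants $\hat{K} f_j$, the identity $\hdbarb^* \hat{K} = I - \hat{\Pi}$, and the characterization of the domain in Lemma~\ref{lem:domainhdbarb*}. Your explicit verification that the range equals $\mathcal{H}^{\perp}$ (using the finite measure of $\mathbb{S}^3$ to pass from $L^4$ to $L^2$ convergence) just fills in a detail the paper leaves implicit in the corresponding step of Lemma~\ref{lem:hZbar}.
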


We then have the following corollaries:
\begin{cor} \label{cor:hZbar}
Suppose $f \in L^2(\mathbb{S}^3)$, and $\langle f, g \rangle_{\mathbb{S}^3} = 0$ for all $g \in L^2(\mathbb{S}^3)$ with $g \homegabar \in \mathcal{H}_1$. Then there exists $\hu \in L^4(\mathbb{S}^3)$, and a sequence $\hu_j \in C^{\infty}_c(\mathbb{S}^3 \setminus \{p\})$, such that $\hu_j \to \hu$ in $L^4(\mathbb{S}^3)$, and $\hZbar \hu_j \to f$ in $L^2(\mathbb{S}^3)$.
\end{cor}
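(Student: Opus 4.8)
The plan is to recognize this corollary as a translation of Lemma~\ref{lem:hZbar} from the language of $(0,1)$ forms into the language of the scalar operator $\hZbar$, with Lemma~\ref{lem:domainhdbarb} serving as the dictionary between the two pictures.

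First I would set $\alpha := f \homegabar$ and check that $\alpha$ satisfies the hypotheses of Lemma~\ref{lem:hZbar}. Since $\homegabar$ is the $(0,1)$ form dual to $\hZbar$, the quantity $(\homegabar,\homegabar)_{\htheta}$ is a strictly positive constant on $\mathbb{S}^3 \setminus \{p\}$, so for any functions $a, b$ one has $\langle a \homegabar, b \homegabar \rangle_{\mathbb{S}^3} = c \langle a, b \rangle_{\mathbb{S}^3}$ with $c := (\homegabar,\homegabar)_{\htheta} > 0$. In particular the map $g \mapsto g \homegabar$ is, up to the scalar $\sqrt{c}$, a unitary identification of $L^2(\mathbb{S}^3)$ with $L^2_{(0,1)}(\mathbb{S}^3)$; since the point $p$ is negligible in measure, every element of $\mathcal{H}_1$ is of the form $g \homegabar$ with $g \in L^2(\mathbb{S}^3)$. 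It follows that $\alpha = f \homegabar \in L^2_{(0,1)}(\mathbb{S}^3)$, and that the orthogonality hypothesis on $f$ --- namely $\langle f, g \rangle_{\mathbb{S}^3} = 0$ for every $g$ with $g \homegabar \in \mathcal{H}_1$ --- is exactly the assertion that $\alpha$ is orthogonal to $\mathcal{H}_1$, because $c \neq 0$.

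Next, Lemma~\ref{lem:hZbar} applies to $\alpha$ and produces a function $\hu$ in the domain of $\hdbarb \colon L^4(\mathbb{S}^3) \to L^2_{(0,1)}(\mathbb{S}^3)$ with $\hdbarb \hu = \alpha = f \homegabar$. To extract the desired approximating sequence I would invoke the equivalence (a)$\Leftrightarrow$(c) in Lemma~\ref{lem:domainhdbarb}: membership of $\hu$ in this domain yields a sequence $\hu_j \in C^{\infty}_c(\mathbb{S}^3 \setminus \{p\})$ with $\hu_j \to \hu$ in $L^4(\mathbb{S}^3)$ and $\hZbar \hu_j \to f'$ in $L^2(\mathbb{S}^3)$ for some $f' \in L^2(\mathbb{S}^3)$, and the concluding clause of that lemma identifies $\hdbarb \hu = f' \homegabar$. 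Comparing with $\hdbarb \hu = f \homegabar$ and using the injectivity of $g \mapsto g \homegabar$ (again up to the null set $\{p\}$) gives $f' = f$ in $L^2(\mathbb{S}^3)$, whence $\hZbar \hu_j \to f$, which is precisely the conclusion.

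The argument is essentially bookkeeping once the two lemmas are in hand; the only point requiring genuine care, and the step I would treat as the main obstacle, is the orthogonality identification in the second paragraph. It hinges on $(\homegabar,\homegabar)_{\htheta}$ being a nonzero constant --- so that $g \mapsto g \homegabar$ preserves orthogonality and the class $\mathcal{H}_1$ is faithfully parametrized by functions $g$ --- together with the observation that deleting the single point $p$ affects neither the $L^2$ nor the $L^4$ norms. Everything else is a direct appeal to Lemma~\ref{lem:hZbar} and the domain characterization in Lemma~\ref{lem:domainhdbarb}.
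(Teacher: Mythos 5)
Your proof is correct and takes essentially the same approach as the paper, whose entire proof is the one-line citation of Lemma~\ref{lem:hZbar} and Lemma~\ref{lem:domainhdbarb}. You have simply supplied the bookkeeping the paper leaves implicit: setting $\alpha = f\homegabar$, transferring the orthogonality hypothesis via the constancy of $(\homegabar,\homegabar)_{\htheta}$, and identifying the limit $f' = f$ through the injectivity of $g \mapsto g\homegabar$ off the null set $\{p\}$.
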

\begin{proof}
This follows from Lemma~\ref{lem:hZbar} and Lemma~\ref{lem:domainhdbarb}.
\end{proof}

Similarly, by Lemma~\ref{lem:hZbar*} and Lemma~\ref{lem:domainhdbarb*}, we have
\begin{cor} \label{cor:hZbar*}
Suppose $f \in L^2(\mathbb{S}^3)$, and $\langle f, g \rangle_{\mathbb{S}^3} = 0$ for all $g \in \mathcal{H}$. Then there exists $\hu \in L^4(\mathbb{S}^3)$, and a sequence $\hu_j \in C^{\infty}_c(\mathbb{S}^3 \setminus \{p\})$, such that $\hu_j \to \hu$ in $L^4(\mathbb{S}^3)$, and $\hZbar^* \hu_j \to f$ in $L^2(\mathbb{S}^3)$.
\end{cor}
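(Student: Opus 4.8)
The plan is to combine the solvability result for $\hdbarb^*$ on $L^4$ (Lemma~\ref{lem:hZbar*}) with the domain characterization (Lemma~\ref{lem:domainhdbarb*}), mirroring exactly the proof of Corollary~\ref{cor:hZbar}. The hypothesis that $f \in L^2(\mathbb{S}^3)$ is orthogonal to $\mathcal{H}$ in $L^2(\mathbb{S}^3)$ is precisely the hypothesis of Lemma~\ref{lem:hZbar*}, so I would begin by applying that lemma to obtain a $(0,1)$ form $\alpha$ in the domain of $\hdbarb^* \colon L^4_{(0,1)}(\mathbb{S}^3) \to L^2(\mathbb{S}^3)$ with $\hdbarb^* \alpha = f$.

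Next I would unpack the domain membership of $\alpha$ via Lemma~\ref{lem:domainhdbarb*}. Since $\alpha$ lies in the domain of $\hdbarb^* \colon L^4_{(0,1)}(\mathbb{S}^3) \to L^2(\mathbb{S}^3)$, part (c) of that lemma supplies a sequence of functions $\hu_j \in C^{\infty}_c(\mathbb{S}^3 \setminus \{p\})$ such that $\hu_j \homegabar \to \alpha$ in $L^4_{(0,1)}(\mathbb{S}^3)$ and $\hZbar^* \hu_j \to g$ in $L^2(\mathbb{S}^3)$ for some $g \in L^2(\mathbb{S}^3)$. The closing assertion of Lemma~\ref{lem:domainhdbarb*} forces $g = \hdbarb^* \alpha = f$, so in fact $\hZbar^* \hu_j \to f$ in $L^2(\mathbb{S}^3)$, which is one of the two convergences demanded by the corollary.

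It then remains to produce the limit function $\hu$ and to pass from convergence of forms to convergence of functions. I would define $\hu \in L^4(\mathbb{S}^3)$ by $\alpha = \hu \homegabar$; this is legitimate because $\homegabar$ is a nonvanishing smooth global $(0,1)$ frame on $\mathbb{S}^3$ of constant pointwise length with respect to $\htheta$. Indeed $(\hZbar,\hZbar)_{\htheta} = (\Zbar,\Zbar)_{\theta}$ on $\mathbb{S}^3 \setminus \{p\}$ (using $\theta = G^2 \htheta$ and $\hZbar = G \Zbar$, the $\htheta \wedge$-terms dropping out since $\Zbar, \overline{\Zbar} \in \ker \htheta$), and the right-hand side is constant there because the standard Heisenberg frame has constant length; by continuity it is constant on all of $\mathbb{S}^3$, and dualizing shows $\homegabar$ has constant length too. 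Consequently $\|\hu_j - \hu\|_{L^4(\mathbb{S}^3)}$ is a fixed constant multiple of $\|\hu_j \homegabar - \alpha\|_{L^4_{(0,1)}(\mathbb{S}^3)}$, so $\hu_j \to \hu$ in $L^4(\mathbb{S}^3)$. Combined with $\hZbar^* \hu_j \to f$ from the previous step, this yields the conclusion.

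I do not expect a genuine obstacle here: the corollary is a formal concatenation of the two cited lemmas, with all of the analytic content (the $L^p$ mapping property of the relative solution operator $\hat{K}$) already absorbed into Lemma~\ref{lem:hZbar*}. The only points requiring a little care are the identification $g = f$, which is handed to us by the final statement of Lemma~\ref{lem:domainhdbarb*}, and the translation of $L^4$ convergence of the forms $\hu_j \homegabar$ into $L^4$ convergence of the scalar functions $\hu_j$, which hinges on the constancy of the pointwise length of $\homegabar$ with respect to $\htheta$ noted above.
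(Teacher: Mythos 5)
Your proposal is correct and is exactly the paper's argument: the paper proves this corollary in one line by citing Lemma~\ref{lem:hZbar*} (to produce $\alpha$ with $\hdbarb^*\alpha = f$) and Lemma~\ref{lem:domainhdbarb*} (to extract the compactly supported approximating sequence), just as you do. Your additional verification that $\homegabar$ has constant pointwise length, so that $L^4$ convergence of the forms $\hu_j\homegabar$ gives $L^4$ convergence of the functions $\hu_j$, is a detail the paper leaves implicit (and uses tacitly throughout, e.g.\ in Lemma~\ref{lem:domainhdbarb}), and it is correct.
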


\section{Proof of Theorem~\ref{thm:dbarb}}

We now proceed to prove Theorem~\ref{thm:dbarb}. Let $f$ be in $L^2(\mathbb{H}^1)$ and orthogonal to the kernel of $\Zbar^*$ in $L^2(\mathbb{H}^1)$. To solve $\Zbar u = f$, since $\hZbar = G \Zbar$, formally it suffices to solve 
\begin{equation} \label{eq:hZbar1}
\hZbar u = Gf.
\end{equation}
To do so one is tempted to use Corollary~\ref{cor:hZbar}. Unfortunately this does not work directly: while $Gf$ is in $L^2(\mathbb{S}^3)$, in general it is not orthogonal to all $g$ with $g\homegabar \in \mathcal{H}_1$. The key observation is the following: instead of solving (\ref{eq:hZbar1}), it suffices to solve
\begin{equation} \label{eq:hZbar2}
\hZbar (hu) = hGf,
\end{equation}
where $h$ is defined as at the end of Section 1. This works because $\hZbar h = 0$ on $\mathbb{S}^3 \setminus \{p\}$, as was observed earlier. Now if $f \in L^2(\mathbb{H}^1)$ and is orthogonal to the kernel of $\Zbar^*$ in $L^2(\mathbb{H}^1)$, then we claim the following:
\begin{enumerate}[(a)]
\item $hGf \in L^2(\mathbb{S}^3)$, and 
\item $hGf $ is orthogonal to all $g$ with $g \homegabar \in \mathcal{H}_1$. 
\end{enumerate}
In fact, 
$$\|hGf\|_{L^2(\mathbb{S}^3)} = \|f\|_{L^2(\mathbb{H}^1)},$$
proving claim (a).
To prove claim (b), we use Lemma~\ref{lem:kerneldbarb*}: if $g \homegabar$ is in $\mathcal{H}_1$, then $\bar{h} G^{-3} g$ is in the kernel of $\Zbar^*$. It follows that
\begin{align*}
\langle hGf , g\rangle_{\mathbb{S}^3} 
= \langle f, \bar{h} G^{-3} g \rangle_{\mathbb{H}^1}
= 0
\end{align*}
by our assumption on $f$. This proves claim (b) above.

Having now proved the claims (a) and (b) above, we invoke Corollary~\ref{cor:hZbar} with $hGf$ in place of $f$. Then we obtain some $\hu \in L^4(\mathbb{S}^3)$, and a sequence $\hu_j \in C^{\infty}_c(\mathbb{S}^3 \setminus \{p\})$, with $\hu_j \to \hu$ in $L^4(\mathbb{S}^3)$, and $\hZbar \hu_j \to hGf$ in $L^2(\mathbb{S}^3)$. Letting $$u := h^{-1} \hu \quad \text{and} \quad u_j:=h^{-1} \hu_j,$$ we have $u \in L^4(\mathbb{H}^1)$, $u_j \in C^{\infty}_c(\mathbb{H}^1)$, $u_j \to u$ in $L^4(\mathbb{H}^1)$, and $$\Zbar u_j = h^{-1} G^{-1} \hZbar \hu_j \to f$$ in $L^2(\mathbb{H}^1)$. (We used (\ref{eq:hZbar}) in the identity on the previous line.) Thus $u \in L^4(\mathbb{H}^1)$ is a solution to $\Zbar u = f.$ This proves our current Theorem.

\section{Proof of Theorem~\ref{thm:dbarb*}}

The proof of Theorem~\ref{thm:dbarb*} parallels that of Theorem~\ref{thm:dbarb}. Let $f \in L^2(\mathbb{H}^1)$ be orthogonal to the kernel of $\Zbar$ in $L^2(\mathbb{H}^1)$. Motivated by (\ref{eq:hZbar*}) with $k = 2$, one rewrites the equation $\Zbar^* u = f$ as 
\begin{equation} \label{eq:hZbar*2}
\hZbar^*(\bar{h}^{-2} G^3 u) = \bar{h}^{-2} G^4 f.
\end{equation}
(It does not work if we had used (\ref{eq:hZbar*}) with $k = 0$!) Now it is easy to check that $\bar{h}^{-2} G^4 f$ is in $L^2(\mathbb{S}^3)$, and orthogonal to $\mathcal{H}$ in $L^2(\mathbb{S}^3)$. In fact, $$\|\bar{h}^{-2} G^4 f\|_{L^2(\mathbb{S}^3)} = \|f\|_{L^2(\mathbb{H}^1)},$$ and if $H$ is in $\mathcal{H}$, then Lemma~\ref{lem:kerneldbarb*} shows that $h^{-2} H$ is in the kernel of $\Zbar$. Thus
\begin{align*}
\langle \bar{h}^{-2} G^4 f, H \rangle_{\mathbb{S}^3} 
&= \langle f, h^{-2} H \rangle_{\mathbb{H}^1} = 0
\end{align*}
by our assumption on $f$. Thus one can invoke Corollary~\ref{cor:hZbar*}, and obtain some $\hu \in L^4(\mathbb{S}^3)$, together with a sequence $\hu_j \in C^{\infty}_c(\mathbb{S}^3 \setminus \{p\})$, such that $$\hu_j \to \hu \text{ in $L^4(\mathbb{S}^3)$}, \quad \text{and} \quad \hZbar^* \hu_j \to \bar{h}^{-2} G^4 f \quad \text{ in $L^2(\mathbb{S}^3)$}.$$ Letting $$u: = \bar{h}^2 G^{-3} \hu \quad \text{and} \quad u_j := \bar{h}^2 G^{-3} \hu_j,$$ we have $u \in L^4(\mathbb{H}^1)$, $u_j \in C^{\infty}_c(\mathbb{H}^1)$, $u_j \to u$ in $L^4(\mathbb{H}^1)$, and $$\Zbar^* u_j = \bar{h}^2 G^{-4} \hZbar^* \hu_j \to f$$ in $L^2(\mathbb{H}^1)$ as desired. (We used (\ref{eq:hZbar*}) with $k = 2$ in the identity on the previous line.) Thus $u \in L^4(\mathbb{H}^1)$ is a solution to the equation $\Zbar^* u = f$. This completes the proof.

\end{document}